\def\NN{\mathbb{N}}
\newcommand{\Gdisc}{G_{\mathrm{disc}}}
\newcommand{\cB}{\mathcal{B}}
\newcommand{\cP}{\mathcal{P}}
\newcommand{\cK}{\mathcal{K}}
\newcommand{\cA}{\mathcal{A}}
\newcommand{\cJ}{\mathcal{J}}
\newcommand{\EE}{\mathbb{E}}
\newcommand{\Hknp}{H^k(n,p)}
\newcommand{\eps}{\varepsilon}
\newcommand{\Bi}{\mathrm{Bi}}
\newcommand{\Be}{\mathrm{Ber}}
\newcommand{\Tstop}{T_{\mathrm{stop}}}
\newcommand{\pathfinder}{\textnormal{\texttt{Pathfinder}}}
\newcommand{\frayalg}{\textnormal{\texttt{Pathbranch}}}
\newcommand{\Forb}{F}
\newcommand{\Part}{\cP}
\newcommand{\IndPart}[2]{\Part_{#1}[#2]}
\theoremstyle{theorem}
\newtheorem{theorem}{\textbf{Theorem}}
\newtheorem{corollary}[theorem]{\textbf{Corollary}}
\newtheorem{lemma}[theorem]{\textbf{Lemma}}
\newtheorem{claim}[theorem]{\textbf{Claim}}
\newtheorem{question-formatless}{\textbf{Question}}
\newtheorem{proposition}[theorem]{\textbf{Proposition}}
\theoremstyle{definition}
\newtheorem{definition}[theorem]{\textbf{Definition}}
\theoremstyle{remark}
\begin{document}

\title[Paths, cycles and sprinkling]{Paths, cycles and sprinkling in random hypergraphs}

\author[O. Cooley]{Oliver Cooley$^{*}$}

\renewcommand{\thefootnote}{\fnsymbol{footnote}}

\footnotetext[1]{Supported by Austrian Science Fund (FWF): I3747.\\
Institute of Discrete Mathematics, Graz University of Technology, Steyrergasse 30, 8010 Graz, Austria. \texttt{cooley@math.tugraz.at}}

\renewcommand{\thefootnote}{\arabic{footnote}}

\begin{abstract}
We prove a lower bound on the length of the longest $j$-tight cycle in a $k$-uniform binomial random hypergraph
for any $2 \le j \le k-1$. We first prove the existence of a $j$-tight path of the required length.
The standard ``sprinkling'' argument is not enough to show that this path can be closed to a $j$-tight cycle --
we therefore show that the path has many extensions, which is sufficient to allow the sprinkling to close
the cycle.
\end{abstract}

\maketitle

\section{Introduction} \label{sec:intro}

\subsection{Paths and cycles in random graphs}

Over the years there has been a considerable amount of research into the length of the longest
paths and cycles in random graphs.
This goes back to the work of Ajtai, Koml\'os and Szemer\'edi~\cite{AjtaiKomlosSzemeredi81},
who showed that in the Erd\H{o}s-R\'enyi binomial random graph $G(n,p)$, the threshold
$p=1/n$ for the existence of a giant component is also the threshold for a path of linear length.
In the supercritical regime, a standard sprinkling argument shows that 
whp\footnote{\emph{with high probability}, meaning with probability tending to $1$
as $n$ tends to infinity.} the lengths of the longest path
and the longest cycle are asymptotically the same, and therefore whp $G(n,p)$ also contains
a cycle of linear length. This has been strengthened by various researchers, including
{\L}uczak~\cite{Luczak91}, and Kemkes and Wormald~\cite{KemkesWormald13}.

We note, however, that when $p=(1+\eps)/n$ for some small $\eps>0$,
the asymptotic length $L_C$ of the longest cycle is still not known precisely: the best known lower and upper bounds
are approximately $4n/3$ (see~\cite{Luczak91}) and $1.7395n$ (see~\cite{KemkesWormald13}) respectively.
On the other hand,
Anastos and Frieze~\cite{AnastosFrieze21} determined the asymptotic length of the longest cycle
precisely
when $p=c/n$ for some sufficiently large constant $c$.

A similar problem, although one requiring very different techniques, is to determine
the length of the longest \emph{induced} path, which was achieved very recently by
Glock~\cite{Glock21} in the regime when $p=c/n$.

\subsection{Paths and cycles in random hypergraphs}

Given an integer $k\ge 2$, a $k$-uniform hypergraph consists of a set $V$ of vertices and
a set $E\subset \binom{V}{k}$ of edges. (A $2$-uniform hypergraph is simply a graph.)
Among the many possible definitions of paths and cycles in hypergraphs, perhaps the most natural and
well-studied is that of \emph{$j$-tight paths and cycles}, which is in fact a family of definitions for $1\le j \le k-1$.

\begin{definition}
Given integers $1\le j \le k-1$ and a natural number $\ell$,
a \emph{$j$-tight path of length $\ell$} in a $k$-uniform hypergraph consists of a sequence of
distinct vertices
$x_1,\ldots,x_{j+(k-j)\ell}$ and a sequence of edges $e_1,\ldots,e_\ell$
such that $e_i = \{x_{(k-j)(i-1)+1},\ldots,x_{(k-j)(i-1)+k}\}$.

A \emph{$j$-tight cycle of length $\ell$} is similar except that
$x_{i} = x_{(k-j)\ell+i}$ for $1 \le i \le j$ (and otherwise all vertices are distinct).
\end{definition}

In the literature, $1$-tight paths/cycles are often called \emph{loose} paths/cycles,
while $(k-1)$-tight is often abbreviated simply to tight.

Let $\Hknp$ denote the $k$-uniform binomial random hypergraph,
in which each $k$-set of vertices forms an edge with probability $p$ independently.
The analogue of the result of Ajtai, Koml\'os and Szemer\'edi
showing a threshold for the existence of a $j$-tight path of linear length
in $\Hknp$ was proved by the author together with Garbe, Hng, Kang, Sanhueza-Matamala and Zalla~\cite{CGHKSZ20}
for all $k$ and $j$.
In contrast to the graph case, in general the threshold is \emph{not} the same
as the threshold for a giant $j$-tuple component (which was determined in~\cite{CooleyKangPerson18}).

Let $a$ be the unique integer satisfying $1 \le a \le k-j$ and $a \equiv k \mod k-j$,
and let
$p_0 = p_0(n,k,j):= \frac{1}{\binom{k-j}{a}\binom{n}{k-j}}$.
The results of~\cite{CGHKSZ20} show that $p_0$ is a threshold for the existence
of a $j$-tight path of linear length in $\Hknp$. Furthermore,
in the case when $p=(1+\eps)p_0$ for some constant $\eps>0$,
upper and lower bounds on the length of the longest $j$-tight path were proved.
In the case when $j\ge 2$, these bounds are $\Theta(\eps n)$ and differ by a factor of $8$.
In the case when $j=1$, the lower bound was $\Theta(\eps^2 n)$ while the upper bound was $\Theta(\eps n)$.

This upper bound in the case when $j=1$ was subsequently improved by the author, Kang and Zalla~\cite{CKZ21}
and shown to be $\Theta(\eps^2 n)$ in the range when $p=(1+\eps)p_0$ (although the results
of that paper also
cover the range $p=c/n$ for any constant $c>1$).
The strategy used was to prove an upper bound on the length of the longest loose
\emph{cycle} which transfers to an upper bound for loose paths using a standard sprinkling argument,
just as has been often observed for graphs.
Similarly, sprinkling can also be used to extend the lower bound on loose paths from~\cite{CGHKSZ20}
to an asymptotically identical lower bound for loose cycles.

\subsection{Sprinkling in hypergraphs}

This raises an obvious question:
can we also use the sprinkling technique for $j\ge 2$, and obtain a $j$-tight cycle from
a $j$-tight path without significantly decreasing the length? Unfortunately, the naive approach does not work.

To see why
first consider the case $j\le k/2$, when we have $p = \Theta(n^{-(k-j)})$
and a path of length $\Theta(n)$. Now for some $\omega\to \infty$,
sprinkle an extra probability of $p/\omega$. We can identify $n/\omega$ many $j$-sets
from the start and from the end of the path with which we attempt to close to a cycle,
and we need a further $k-2j$ vertices from outside the cycle to complete an edge.
Thus the number of potential edges which would close the cycle is $\Theta((n/\omega)^2 n^{k-2j})$,
and the expected number of suitable edges we find is
$$
\frac{p}{\omega} \cdot\Theta\left(\frac{n^{k-2j+2}}{\omega^2}\right) = \Theta\left(\frac{n^{2-j}}{\omega^3}\right).
$$
This will be clearly enough if $j=1$ and if $\omega$ tends to infinity sufficiently slowly, but for $j\ge 2$
the argument fails.
Indeed, for $j > k/2$, the situation becomes even worse: here we even need more than one edge
in order to be able to close the path to a cycle.

The essential reason why the sprinkling no longer works stems from the interplay between
the $j$-sets and the vertices: a $j$-tight path ``lives'' on vertices, but is extended (or closed to a cycle)
via $j$-sets. The number of $j$-sets within the path is naturally bounded by $\Theta(n)$,
but this is tiny compared to the number of $j$-sets in the world (namely $\binom{n}{j}$).

\subsection{Main result}
The main contribution of this paper is to provide a variant of the sprinkling argument which
does work for $j\ge 2$. In particular, we provide a search algorithm which whp will
construct a long $j$-tight cycle in $\Hknp$. We thus provide a lower bound on the length of the longest
$j$-tight cycle. Along the way, we generalise the lower bound for $j$-tight paths
given in~\cite{CGHKSZ20} to be applicable for a larger range of $p$.

Let $L_C=L_C(n,k,j,p)$ be the random variable denoting the length of the longest
$j$-tight path in $\Hknp$.

\begin{theorem}\label{thm:main}
Let $k,j \in \NN$ satisfy $2 \le j \le k-1$ and let $a$ be the unique integer satisfying $1 \le a \le k-j$
and $a \equiv k \mod k-j$.
Let $p_0 = p_0(n,k,j):= \frac{1}{\binom{k-j}{a}\binom{n}{k-j}}$.

For any $\delta>0$, for any constant $c>1$ and for any
sequence $(c_n)_{n \in \NN}$ satisfying $c_n \to c$ the following is true.
Suppose that $p=c_n p_0$. Then
whp
$$
L_C \ge (1-\delta)\cdot \frac{1-c^{-1/(k-j)}}{k-j} \cdot n.
$$
\end{theorem}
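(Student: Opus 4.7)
The plan is to decompose $p$ as $(1-p) = (1-p_1)(1-p_2)$ with $p_1 = (1-\eta)p$ for a small $\eta>0$, and to (i) construct a long $j$-tight path $P$ in the $p_1$-random hypergraph, and then (ii) use the sprinkling probability $p_2$ to close $P$ into a $j$-tight cycle. As discussed in the introduction, step (ii) fails naively for $j\ge 2$, since the number of candidate endpoint pairs produced by a single path $P$ is only $O(n^2)$, while bridging them appears to require access to the full $\binom{n}{j}^2$ many $j$-set pairs. The workaround is to pre-produce many distinct candidate endpoint $j$-sets at each end of $P$ before sprinkling.

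For step (i), I would run a refined \pathfinder-type algorithm on the $p_1$-hypergraph, generalising the argument of~\cite{CGHKSZ20} from the near-threshold regime $c = 1+\eps$ to an arbitrary constant $c>1$. Starting from an arbitrary unused $j$-set, at each iteration we query whether $\Hknp$ restricted to $p_1$-edges contains an edge with the current endpoint $j$-set together with $k-j$ hitherto unused vertices. A deferred-decision exposure keeps the queries at distinct steps independent, and at step $i$ (when $U_i := j+i(k-j)$ vertices have been used) the success probability is
\[
q_i := 1 - (1 - p_1)^{\binom{k-j}{a}\binom{n-U_i}{k-j}}.
\]
A direct calculation shows that $q_i$ stays bounded away from zero for all $i$ with $U_i \le (1-\eta')(1-c^{-1/(k-j)})n$, and a Chernoff-type bound on the sum of successful trials then yields a path of the claimed length whp.

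For step (ii), I would apply a branching procedure \frayalg at the two endpoints of $P$, still using edges of $H^k(n,p_1)$ that were exposed but not consumed by \pathfinder. From each current candidate $j$-set, \frayalg searches for many vertex-disjoint single-edge extensions into the remaining pool of free vertices, producing a tree of alternative endpoint $j$-sets. Since $c>1$, the effective branching ratio exceeds $1$, so after $\Theta(\log n)$ levels the tree at each end has $n^{\Omega(1)}$ leaves, corresponding to $n^{\Omega(1)}$ distinct candidate endpoint $j$-sets. This yields $n^{\Omega(1)}$ many candidate closing configurations (a choice of a leaf on each side together with a prescribed way to bridge them); the sprinkling $p_2$ then realises at least one of them whp, even when $j>k/2$ and a short chain of sprinkled edges is needed to bridge the two selected $j$-sets. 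Composing the chosen branches with $P$ closes it into a $j$-tight cycle, at a cost of only $O(\log n)$ in length.

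The main obstacle is the branching analysis: one must simultaneously ensure that the tree has $n^{\Omega(1)}$ leaves, that the corresponding $j$-sets are distinct and use essentially disjoint vertices from $P$ and from each other, and that the induced closing configurations are decorrelated enough for the $p_2$-sprinkling to behave like a union of near-independent trials -- likely via Janson-type inequalities or a second-moment argument. Coupling the tree growth to a supercritical Galton--Watson process while carefully tracking vertex depletion along each branch, and iterating the sprinkling across a bounded number of bridging edges when $j>k/2$, is where the bulk of the novel technical work should lie.
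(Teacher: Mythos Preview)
Your high-level plan coincides with the paper's: build a long path at probability $(1-o(1))p$, fan out at both ends via a branching procedure, then sprinkle to close a cycle. But two of the steps, as you describe them, would not go through.

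\textbf{The fan-out is quantitatively too weak.} You assert that $n^{\Omega(1)}$ endpoint $j$-sets on each side suffice for sprinkling. A closing configuration consists of a pair $(A,B)$ of ends together with $b=k-j-a$ extra vertices and requires $s=\lceil j/(k-j)\rceil$ sprinkled edges; since $s(k-j)=j+b$, the expected number of realised configurations is $\Theta\bigl(|\cA|\,|\cB|\,n^b (p_2)^s\bigr)=\Theta\bigl(|\cA|\,|\cB|\,n^{-j}\omega^{-s}\bigr)$. Hence you need $|\cA|,|\cB|\gg n^{j/2}$, not merely $n^{\Omega(1)}$. A barely supercritical branching with ratio $1+\Theta(\delta)$ run for $\Theta(\log n)$ generations gives only $n^{\Theta(\delta)}$ leaves, which is nowhere near enough when $j\ge 2$. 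The paper instead runs the BFS for up to $(\log n)^2$ generations and proves that it produces $\Theta(n^j)$ ends before then; this order is what the closing calculation actually needs.

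\textbf{The greedy path-building does not produce a long path.} Your step~(i) extends the current end $j$-set one edge at a time and stops at the first failure; but $q_i=1-(1-p_1)^{\binom{k-j}{a}\binom{n-U_i}{k-j}}$ is bounded away from $1$ (for $c$ near $1$ it is about $1-e^{-1}$), so the process dies after $O(1)$ steps and no Chernoff bound on ``successful trials'' rescues it. The paper's \pathfinder\ is a genuine DFS with backtracking, and the key technical input is a bounded-degree lemma showing that previously explored $j$-sets forbid only a negligible fraction of the $\binom{n-v_\ell}{k-j}$ candidate $k$-sets at each step.

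Finally, the paper handles compatibility of the two fans not by enforcing vertex-disjointness or by a Janson argument, but by an elementary trick: after the first BFS it marks as \emph{heavy} the $o(n)$ vertices lying in too many augmenting paths, forbids them in the second BFS, and concludes that a $1-o(1)$ fraction of pairs $(A,B)$ have vertex-disjoint augmenting paths. One then greedily extracts $\Theta(n^k)$ closing configurations whose required $k$-sets are pairwise distinct, so that the sprinkling trials are genuinely independent.
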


Note that it is trivially true that $L_P\ge L_C-O(1)$, where $L_P$ denotes
the length of the longest $j$-tight path in $\Hknp$. Therefore as a corollary
we also obtain a lower bound on $L_P$ which generalises the one in~\cite{CGHKSZ20}.

\section{Preliminaries}

\subsection{Notation and terminology}
In this section we introduce some notation and terminology, and fix various parameters
for the rest of the paper.

Throughout the paper, let $k,j$ be fixed natural numbers satisfying $2 \le j \le k-1$.
In particular, for the rest of the paper we will usually simply refer to paths and cycles
rather than $j$-tight paths and $j$-tight cycles, since $j$ is understood.

All asymptotics in the paper are as $n\to \infty$, and in particular we will use
the standard Landau notation $o(\cdot),O(\cdot),\Theta(\cdot)$ with respect
to these asymptotics. We consider $k,j$ to be constants, so for example
a bound of $O(n)$ may have a constant that is implicitly dependent on $k$ and $j$.

Let us
further define the following parameters.
Let $a=a(k,j)$ be the unique integer satisfying $1 \le a \le k-j$ and
$$
a \equiv k \mod k-j.
$$
The motivation for this parameter will become clear in Section~\ref{sec:structure}.
Given a natural number $\ell$, let $v_\ell = v_\ell(j,k):= j+\ell(k-j)$
denote the number of vertices in a path of length $\ell$.
When $\ell =\Theta(n)$, we will often
approximate $v_\ell$ simply by $\ell(k-j) = (1+O(1/n))v_\ell$.

Let $p_0 = p_0(n,k,j):= \frac{1}{\binom{k-j}{a}\binom{n}{k-j}}$ denote the threshold for a long tight path.
Given $p = p(n)=c_n p_0$ for some sequence $c_n$ of positive real numbers, let
$$
L_1 = L_1(p) := \frac{1-c^{-1/(k-j)}}{k-j} \cdot n.
$$
Note that the parameters
$n,c,k,j$ are implicit in $p$ and will be clear from the context.
Further, let $L_C = L_C(n,p,k,j)$ denote the length of the longest $j$-tight cycle
in $\Hknp$.

For an integer $m$, we denote $[m]:=\{1,\ldots,m\}$ and $[m]_0 := [m] \cup \{0\}$.
We omit floors and ceilings when this does not significantly affect calculations.

\subsection{The structure of paths}\label{sec:structure}

In graphs, there are only two paths with the same edge set (the second is obtained by
reversing orientation), but depending on the values of $k$ and $j$, there may be
many ways of reordering the vertices of a $j$-tight path within the edges which give a different path with the same edges.
For example, in Figure~\ref{fig:74path}, we may re-order $x_1,x_2,x_3$ arbitrarily.
Even in the middle of the path, we may switch the order of $x_8$ and $x_9$ to give a new path.
Nevertheless, we will identify paths which have the same set of edges, and indeed often identify
a path with its edge set. We similarly identify cycles with their edge sets.

\vspace{0.4cm}
\begin{center}
\begin{figure}[h]

\begin{tikzpicture}[level/.style={},decoration={brace,mirror,amplitude=7},scale=1.3]

\fill  (0,0)  circle (0.06)  ;
\fill  (0.6,0) circle (0.06) ;
\fill  (1.2,0) circle (0.06) ;
\fill (1.8,0) circle (0.06);
\fill (2.4,0) circle (0.06);
\fill  (3,0) circle (0.06) ;
\fill (3.6,0) circle (0.06);
\fill (4.2,0) circle (0.06);
\fill  (4.8,0) circle (0.06) ;
\fill (5.4,0) circle (0.06);
\fill (6,0) circle (0.06);
\fill  (6.6,0) circle (0.06) ;
\fill (7.2,0) circle (0.06);
\fill (7.8,0) circle (0.06);
\fill (8.4,0) circle (0.06);
\fill (9,0) circle (0.06);

\node (1) at (0,-0.8) {$x_1$};
\node (2) at (0.6,-0.8) {$x_2$};
\node (3) at (1.2,-0.8) {$x_3$};
\node (4) at (1.8,-0.8) {$x_4$};
\node (5) at (2.4,-0.8) {$x_5$};
\node (6) at (3,-0.8) {$x_6$};
\node (7) at (3.6,-0.8) {$x_7$};
\node (8) at (4.2,-0.8) {$x_8$};
\node (9) at (4.8,-0.8) {$x_9$};
\node (10) at (5.4,-0.8) {$x_{10}$};
\node (11) at (6,-0.8) {$x_{11}$};
\node (12) at (6.6,-0.8) {$x_{12}$};
\node (13) at (7.2,-0.8) {$x_{13}$};
\node (14) at (7.8,-0.8) {$x_{14}$};
\node (15) at (8.4,-0.8) {$x_{15}$};
\node (16) at (9,-0.8) {$x_{16}$};

\draw (1.8,0) ellipse (58pt and 12pt);
\draw[thick, dotted] (3.6,0) ellipse (58pt and 10pt);
\draw[dashed,rounded corners] (3.4,-0.5) rectangle (7.4,0.5);
\draw (7.2,0) ellipse (58pt and 12pt);

\end{tikzpicture}
\caption{A $4$-tight path of length $4$ in a $7$-uniform hypergraph.\label{fig:74path}}
\end{figure}
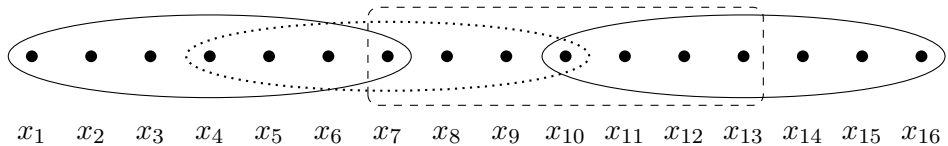

\end{center}

A further important point to note is which $j$-sets we can continue from: For example,
in Figure~\ref{fig:74path}, it seems natural to continue from the $4$-set $\{x_{13},\ldots,x_{16}\}$,
but since the vertices $x_{11},x_{12},x_{13}$ may be rearranged arbitrarily, we could just as well
replace $x_{13}$ by either of $x_{11},x_{12}$ in this $4$-set.

To account for this, we will borrow the following terminology from~\cite{CGHKSZ20}.

\begin{definition}\label{def:extpart}
An \emph{extendable partition} of a $j$-set $J$
is an ordered partition $(C_0, C_1, \dotsc, C_r)$ of $J$,
where $r=\lfloor \frac{j}{k-j} \rfloor$,
with $|C_0| = a$ and $|C_i| = k-j$ for all $i \in [r]$.
\end{definition}

In the example above, the $4$-set $\{x_{10},\ldots,x_{13}\}$ would have
extendable partition $(C_0,C_1)$, where $C_0 = \{x_{10}\}$ and $C_1=\{x_{11},x_{12},x_{13}\}$.
In a search process, the final edge $\{x_{10},\ldots,x_{16}\}$ added to this path
would give rise to three new $4$-sets from which we can continue,
namely $J_i:=\{x_i,x_{14},x_{15},x_{16}\}$, where $i=11,12,13$.
The extendable partition of $J_i$ would be $(C_0^{(i)},C_1^{(i)})$, where $C_0^{(i)} = \{x_i\}$
and $C_1^{(i)} = \{x_{14},x_{15},x_{16}\}$.

For general $k$ and $j$, when we discover an edge $K$ from a $j$-set $J$
with extendable partition $(C_0,\ldots,C_r)$,
the new $j$-sets from which we can continue will be those
consisting of $a$ vertices from $C_1$, all vertices of $C_2,\ldots,C_r$ and all vertices of $K\setminus J$,
and these sets (in this order) will naturally form an extendable partition of the new $j$-set.

We refer the reader to~\cite{CGHKSZ20} for a more detailed discussion of the structure of paths.

\section{Proof outline}

The initial, naive proof idea is to construct a long path using a search process,
and then apply a sprinkling argument to close this path into a cycle. However,
in its most basic form this argument fails for the reasons outlined in the introduction:
we have too few potential attachment $j$-sets and too many required edges for the
sprinkling to work.

Nevertheless, this will still be our overarching strategy, it just needs to be modified slightly.
More precisely, we will aim to construct a \emph{family} of long paths, all of which
are identical along most of their length, but which diverge towards the two ends.
This will give us many more potential attachment $j$-sets, and allow us to push the sprinkling argument through.

As such, we have two main lemmas in the proof.
Let $L_P$ denote the length of the longest path in $\Hknp$.

\begin{lemma}\label{lem:longpath}
Under the conditions of Theorem~\ref{thm:main}, whp $L_P \ge \left(1-\frac{\delta}{3}\right)L_1(p)$.
\end{lemma}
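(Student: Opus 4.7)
The plan is to prove Lemma~\ref{lem:longpath} by running a greedy depth-first search (the $\pathfinder$ procedure indicated in the introduction) that extends a $j$-tight path from a random starting $j$-set, and by comparing the growth of its search tree with a supercritical branching process whose mean remains bounded above $1$ throughout the relevant range of depths. This is the broad strategy already used in~\cite{CGHKSZ20}, but since $c$ may be any constant rather than of the form $1+\eps$ with $\eps$ small, the analysis must track vertex depletion accurately instead of via a first-order Taylor expansion.

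First I would set up $\pathfinder$ as follows. Start from a uniformly random $j$-set $J_0$ equipped with its canonical extendable partition, and maintain a current $j$-tight path $P_t$ together with a stack of currently viable extendable $j$-sets along $P_t$, whose top is the working endpoint $J_t$. To attempt an extension, use the principle of deferred decisions to query each $(k-j)$-subset $W$ of $V\setminus V(P_t)$, asking whether $J_t\cup W$ is an edge of $\Hknp$; if some such query succeeds, extend $P_t$ by that edge and push the $\binom{k-j}{a}$ new extendable $j$-sets produced by Section~\ref{sec:structure} onto the stack. If every query fails, declare $J_t$ dead, pop it off the stack, and backtrack to the new top. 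The process halts the first time $|P_t|$ reaches the target length $\ell^*:=(1-\delta/3)L_1(p)$.

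The central estimate is that while $\ell := |P_t|\le \ell^*$, the expected number of children of $J_t$ in the search tree equals
$$
\binom{k-j}{a}\binom{n-v_\ell}{k-j}p = (1+o(1))\cdot c_n\cdot \left(1-\tfrac{\ell(k-j)}{n}\right)^{k-j},
$$
which, by the definition of $L_1(p)$, equals $1$ exactly at $\ell=L_1(p)$ and is bounded below by a constant $\gamma=\gamma(c,\delta)>1$ uniformly for $\ell\le \ell^*$. I would then dominate the depth-by-depth population of live extendable $j$-sets from below by a supercritical Galton-Watson process of mean $\gamma$ truncated at depth $\ell^*$, and conclude via standard branching-process concentration (together with a martingale on the generation sizes) that with probability bounded away from zero the search reaches depth $\ell^*$ before the stack empties. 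An initial sprinkling decomposition $p=p_1+p_2$ with $p_2=p/\omega$ for some $\omega\to\infty$, together with $\omega$ independent starting $j$-sets, then boosts this to \whp.

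The main obstacle is a careful decoupling argument justifying the branching-process comparison. Vertex-set depletion already appears in the factor $(1-\ell(k-j)/n)^{k-j}$; distinct queries may refer to the same $k$-edge when their respective $j$-sets are both $j$-subsets of that edge, but this affects only an $o(1)$ fraction of queries and can be absorbed into $\gamma$; and $j$-sets arising from different extendable partitions of the same edge must not be counted as independent children, which is exactly what the factor $\binom{k-j}{a}$ and the construction of extendable partitions are designed to encode. The $\delta/3$ slack in the target length leaves enough room to absorb all of these lower-order losses into the strict inequality $\gamma>1$.
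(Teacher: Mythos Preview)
Your high-level strategy --- run a DFS and compare to a supercritical branching process with mean $\binom{k-j}{a}\binom{n-v_\ell}{k-j}p$ --- is the right intuition, but the proposal has a genuine gap exactly at the point you yourself flag as ``the main obstacle''. You assert that repeated queries of the same $k$-set ``affect only an $o(1)$ fraction of queries and can be absorbed into $\gamma$'', but this is not automatic and is in fact the crux of the whole proof. From a current $j$-set $J$, the number of $k$-sets already queried from some earlier $j$-set $J'$ is governed by the maximum $i$-degrees $\Delta_i$ of the $j$-uniform hypergraph of discovered $j$-sets (see Proposition~\ref{prop:dfs:manyqueriesfromJ}): the lost fraction is $\sum_{i=0}^{j-1} O(\Delta_i(t)/n^{j-i})$. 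Nothing in your outline bounds $\Delta_i$; indeed, since the target depth is $\Theta(n)$, the DFS necessarily discovers at least $\Theta(n)$ $j$-sets even on a successful run, and with backtracking it may discover many more, clustered in ways you have not controlled. Without such control, the stochastic domination from below by a Galton--Watson tree cannot be established, because negatively correlated repeat queries (a $k$-set already revealed to be a non-edge) genuinely hurt the lower bound.

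The paper handles this by building the constraint into the algorithm --- a $k$-set is never queried if it contains any previously active or explored $j$-set, guaranteeing independence --- and then proving a \emph{bounded degree lemma} (Lemma~\ref{lem:dfsstop:degree}) showing that whp $\Delta_i(t) \le \eps c_i n^{j-i}$ throughout, so that the forbidden $k$-sets are a negligible fraction. This is done by classifying degree increments at an $i$-set $I$ into \emph{new starts}, \emph{jumps} and \emph{pivots}, and bounding each inductively on $i$. The paper then does \emph{not} argue via survival probability plus boosting; instead it imposes a time budget $t_0=\eps^2 n^k$ and shows by a double-counting argument (Proposition~\ref{prop:dfsstop:time}) that if the target length has not been reached and the degree condition still holds, then the number of queries made already exceeds $t_0$, a contradiction. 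This sidesteps your branching-process machinery entirely. Your boosting step is also unclear as written: $\omega$ starting $j$-sets in the same hypergraph are not independent, and the proposed sprinkling decomposition $p=p_1+p_2$ does not obviously restore independence of the $\omega$ DFS runs.
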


The proof of this lemma is essentially the same as the proof for the special case
of $p=(1+\eps)p_0$ in~\cite{CGHKSZ20}. We first define an appropriate depth-first search process for constructing $j$-tight paths.
Heuristically, this DFS is supercritical for as long as
the path constructed has length significantly smaller than $L_1$.
However, the algorithm will avoid re-using $j$-sets that have already been tried, if they
led to dead-ends, and we need to know that this will not slow down the growth too much.
For this, we need a \emph{bounded degree lemma} which shows that,
in an appropriate sense, these $j$-sets
are evenly distributed in the hypergraph, rather than clustered together.
The proof of Lemma~\ref{lem:longpath} is given in Section~\ref{sec:longpath}.

The main original contribution of this work is the second lemma,
which guarantees the existence of a family of $j$-tight paths with many different endpoints.
The DFS algorithm is well-suited to creating long paths quickly, but in order to fan out
towards the ends, we will switch to a \emph{breadth}-first search algorithm.
The result of this algorithm will be the following structure.

\begin{definition}
Given integers $\ell_1,\ell_2$,
a $j$-set $J$ and a path $P$ of length $\ell_1$ with end $J'$, we say that $J$ \emph{$\ell_2$-augments} the pair $(P,J')$
if there exists a path $P_{J, J'}$ starting at $J$ and ending at $J'$ such that $P_{J, J'} \cup P$ is again a path,
and has length at most $\ell_1+\ell_2$.
\end{definition}

In other words, we can extend $P$ by length at most $\ell_2$ to end at $J$ instead of $J'$.

\begin{lemma}\label{lem:manyends}
Under the conditions of Theorem~\ref{thm:main},
there exists some constant $\eps \in (0,1)$ such that the following holds.
Whp $\Hknp$ contains a $j$-tight path $P_0$ of length $(1-\delta/2)L_1(p)$ with ends $J_s,J_e$,
and collections $\cA,\cB$ of $j$-sets such that:
\begin{itemize}
\item $|\cA|,|\cB| = \eps^2 n^j$;
\item Every $j$-set $A \in \cA$ $2(\log n)^2$-augments $(P_0,J_s)$;
\item Every $j$-set $B \in \cB$ $2(\log n)^2$-augments $(P_0,J_e)$;
\item For at least $(1-\eps)\eps^4 n^{2j}$ pairs $(A,B) \in \cA \times \cB$, the augmenting paths $P_{A,J_s},P_{B,J_e}$ are vertex-disjoint.
\end{itemize}
\end{lemma}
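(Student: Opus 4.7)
The plan is a three-phase strategy: build the long path $P_0$, fan out from its endpoints via a BFS, and verify disjointness of most augmenting path pairs via a second-moment computation. In Phase~1, I run the DFS of Lemma~\ref{lem:longpath} but halt once the current path first reaches length $(1-\delta/2)L_1(p)$. Since $(1-\delta/2)<(1-\delta/3)$, the same supercriticality estimate applies: the DFS succeeds whp, leaving a linear reserve $n' = \Omega(n)$ of unused vertices. Let $P_0$ be the resulting path and let $J_s, J_e$ be chosen as two $j$-set ``endpoints'' (taken from the last edges on each side of $P_0$, with some flexibility exploited below).

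In Phase~2, from $J_s$ I grow a BFS in the fresh portion of $\Hknp$: from a $j$-set $J'$ at depth $t$, each edge $K \supseteq J'$ whose extra $k-j$ vertices lie outside $V(P_0)$ and have not yet been exposed contributes $\binom{k-j}{a}$ continuation $j$-sets as children of $J'$ in the BFS tree $T_s$, per Definition~\ref{def:extpart}. The expected offspring count is
\[
\binom{k-j}{a}\,p\,\binom{n''}{k-j} \;=\; c\,(n''/n)^{k-j},
\]
where $n''$ is the current fresh pool; for $t \le 2(\log n)^2$ and tree sizes $o(n)$ this stays at some $c' > 1$ arbitrarily close to $c$, so the BFS is a supercritical Galton--Watson process. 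Such a process survives with positive constant probability $q$, and conditional on survival its size at depth $t$ concentrates around $(c')^t$ by the Kesten--Stigum theorem. To upgrade to whp I run parallel BFSs from $\omega(1)$ candidate seed $j$-sets obtained from the last $\omega(1)$ edges of $P_0$; these seeds are essentially independent, so the probability that all of them die is $(1-q)^{\omega(1)} = o(1)$. I take $J_s$ to be a seed whose BFS survives (so $J_s$ is chosen flexibly, with at most a tiny adjustment to the $j$-set serving as the end of $P_0$), and let $\cA$ be any subset of the BFS-reachable $j$-sets of size exactly $\eps^2 n^j$, with augmenting paths $P_{A,J_s}$ given by ancestor chains in $T_s$. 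Construct $\cB$ analogously from $J_e$.

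In Phase~3, let $d_A(v) := |\{A \in \cA : v \in V(P_{A,J_s})\}|$ and $d_B(v)$ analogously. The number of pairs $(A,B)$ whose augmenting paths share a vertex is at most
\[
\sum_v d_A(v)\,d_B(v) \;\le\; \Bigl(\sum_v d_A(v)^2\Bigr)^{1/2}\Bigl(\sum_v d_B(v)^2\Bigr)^{1/2}.
\]
Because the BFS exposes only fresh vertices, each $v$ has a unique depth $t_v$ in $T_s$, and $d_A(v)$ is bounded by the size of the subtree below $v$'s first appearance. Stratifying by $t_v$ on the approximately $c'$-ary tree yields $\sum_v d_A(v)^2 = O(|\cA|^2)$, and similarly for $d_B$. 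Hence the bad-pair count is $O(|\cA|\,|\cB|)$, which is at most $\eps\cdot\eps^4 n^{2j}$ for a sufficiently large constant $\eps \in (0,1)$ depending on $c,k,j$.

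The principal technical challenge is Phase~2: rigorously upgrading positive-probability BFS survival to a whp guarantee, and keeping the offspring mean above~$1$ over all $\Theta(\log n)$ generations as fresh vertices are consumed. Once this is handled, Phase~3 reduces to a routine $L^2$-counting argument on the BFS tree.
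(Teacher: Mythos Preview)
Your Phase~2 BFS insists that each new edge use $k-j$ vertices that ``have not yet been exposed'', and Phase~3 relies on this when you assert that ``each $v$ has a unique depth $t_v$ in $T_s$''. But this vertex-freshness constraint caps the size of $T_s$ at $O(n)$: every edge of the tree consumes $k-j$ fresh vertices and produces $\binom{k-j}{a}$ new $j$-sets, so the total number of $j$-sets reachable is at most $\binom{k-j}{a}\cdot n/(k-j)=O(n)$. Since $j\ge 2$, you can never reach $|\cA|=\eps^2 n^j$. The paper's BFS (\frayalg) avoids this by allowing vertex reuse across branches: from an active $j$-set $J$ it only forbids vertices of the \emph{specific} path $P_J$ (plus $P_0'$ and the small forbidden set $\Forb$), and it forbids already-explored $j$-\emph{sets} rather than vertices. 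The resulting interference is controlled by a bounded-degree lemma on the maximum $i$-degrees $\Delta_i$ of the discovered $j$-sets, which is the real technical core here and has no analogue in your proposal.

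Two further points. First, your whp upgrade via $\omega(1)$ independent seeds is replaced in the paper by the cleaner device of trimming $(\log n)^2$ edges from each end of the DFS path and seeding the BFS with the resulting $(\log n)^2$ end-$j$-sets; the initial generation is then already large enough for Chernoff bounds to apply directly, with no appeal to Galton--Watson survival or Kesten--Stigum. Second, your Phase~3 Cauchy--Schwarz bound only gives $\sum_v d_A(v)d_B(v)=O(|\cA|\,|\cB|)$ with an implicit constant $C=C(c,k,j)$ you do not control, so the claim that this is at most $\eps\cdot\eps^4 n^{2j}$ for some $\eps<1$ is not justified. The paper instead identifies the small set of \emph{heavy} vertices (those lying in a $(\log n)^3/n$ fraction of the first family of augmenting paths), proves by double counting that there are at most $\delta^2 n$ of them, and then forbids exactly these vertices in the second BFS; this makes every vertex of every $P_{B,J_e}$ light, whence each $B$ conflicts with only an $o(1)$ fraction of $\cA$.
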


We will show how Lemma~\ref{lem:manyends}
follows from Lemma~\ref{lem:longpath} in Section~\ref{sec:manyends}.
Before continuing with the proofs of these two lemmas,
let us first show how Lemma~\ref{lem:manyends} implies our main theorem.

\begin{proof}[Proof of Theorem~\ref{thm:main}]
Let $\omega$ be some function of $n$ tending to infinity arbitrarily slowly,
and let $p':= (1-1/\omega)p$. We apply Lemma~\ref{lem:manyends} with $p'$ in place of $\omega$.
Let us observe that $p' = c_n' p_0$, where $c_n':= (1-1/\omega)c_n \to c$,
and therefore we have $L_1(p')=L_1(p)$.
It follows that the path $P_0$
provided by Lemma~\ref{lem:manyends} has length at least $(1-\delta/2)L_1(p) \ge (1-\delta)L_1(p)$.

Now for each pair $(A,B)\in \cA \times \cB$ satisfying the last condition of Lemma~\ref{lem:manyends},
concatenating the paths $P_{A,J_s},P_0,P_{B,J_e}$ gives a path $P_{A,B}$ with ends $A$ and $B$
and containing $P_0$, which therefore has length at least $(1-\delta)L_1(p)$ (the length of $P_0$), but also of length at most
$(1-\delta/2)L_1(p') + 2(\log n)^2 = (1-\Theta(1))n$.
In other words, $P_{A,B}$ leaves a set $V_{A,B}$ of $\Theta(n)$ vertices uncovered.

Let us now sprinkle an additional probability of $p'':=p-p'$ onto the hypergraph.
In order to close $P_{A,B}$ to a cycle, we need to find a configuration containing
$s=\lceil \frac{j}{k-j} \rceil$ edges and $b=k-j-a$ vertices of
$V_{A,B}$.
For a fixed choice of $A,B$ and $b$ vertices of $V_{A,B}$,
the probability that the $s$ required edges exist is simply $(p'')^s$.
For fixed $A$ and $B$, but for different choices of the $b$ vertices
of $V_{A,B}$, these edges are all distinct.
However, given two choices $A_1,B_1,R_1$ and $A_2,B_2,R_2$ of $A,B$ and $b$ vertices
from $V_{A,B}$, it is possible that the configurations require the same $k$-set to be an edge,
and thus we no longer have independence. We therefore show that there are sufficiently many
choices for which the $k$-sets \emph{are} all distinct.

To see this, observe that there are $\Theta\left(\eps^4 n^{2j+b}\right)$ choices for
the triple $(A,B,R)$,
and any particular $k$-set is required to be an edge by at most $O(n^{2j+b-k})$ triples.
Therefore any choice of triple shares a $k$-set with at most $O(n^{2j+b-k})$ other triples,
and we may greedily choose $\Theta\left(\eps^4 n^{2j+b-(2j+b-k)}\right) = \Theta(\eps^4 n^k)$
without conflicts.

By choosing this many triples, we observe that the probability that none of them closes a cycle is
$$
(1-(p'')^s)^{\Theta\left(\eps^4 n^k\right)} \le \exp\left(-\Theta\left(\frac{\eps^4n^k}{\omega^s n^{s(k-j)}}\right)\right).
$$
Now recall that $s=\lceil \frac{j}{k-j}\rceil = \lceil\frac{k}{k-j}\rceil -1 \le \frac{k-1}{k-j}$.
Thus the probability that none of the choices of $A,B,R$ admits the edges necessary to close a cycle is at most
$$
\exp\left(-\Theta\left(\frac{\eps^4n}{\omega^{(k-1)/(k-j)}}\right)\right) = o(1),
$$
where the last estimate follows since $\omega$ tends to infinity arbitrarily slowly,
so in particular we have $\omega^{(k-1)/(k-j)} = o(n)$.
\end{proof}

\section{Depth-first search: proof of Lemma~\ref{lem:longpath}}\label{sec:longpath}

Since the proof of Lemma~\ref{lem:longpath} is essentially the same as that of
the special case when $p=(1+\eps)p_0$ from~\cite{CGHKSZ20}, we will not go into
full detail here. However, we will outline the argument, partly to make this paper self-contained
and partly because some of the ideas will reappear in the more complicated
proof of Lemma~\ref{lem:manyends} in Section~\ref{sec:manyends}.

In order to prove the existence of a long path, we borrow the \pathfinder\ algorithm
from~\cite{CGHKSZ20}. This is in essence a depth-first search algorithm; however,
there are a few complications in comparison to the graph case.

Recall from Section~\ref{sec:structure} that, depending on the values of $k$ and $j$,
when we add an edge to the current path, we may have multiple new $j$-sets
from which we could extend the path.
For this reason, each time we increase the length of the path, we produce
a \emph{batch} of $j$-sets with which the path could potentially end. In the example
in Figure~\ref{fig:74path}, the batch would consist of the three $4$-sets containing the
three new vertices and one of the previous three vertices;
more generally, a batch will contain any $j$-set from which the path can be extended if we
discover a further edge containing that $j$-set (and no other vertices from the current path).

During the algorithm, at each time step we will \emph{query} a $k$-set to determine whether it forms an
edge or not. This may be thought of as revealing the outcome of a $\Be(p)$ random variable corresponding
to this $k$-set (with these variables being mutually independent).

We will describe $j$-sets as being
\emph{neutral}, \emph{active} or \emph{explored};
initially all $j$-sets are neutral; a $j$-set $J$ becomes active if we have discovered a
path which can end in $J$ (in which case a whole batch becomes active);
$J$ becomes explored once we have queried all possible $k$-sets from $J$.

Of course, in order to produce a path we will not query any $k$-sets from $J$
that contain any further vertices (apart from $J$) of the current path.
But more than this, in order to allow \emph{analysis} of the algorithm, we place an 
additional restriction: specifically, we do not query any $k$-set that contains any other
active or explored $j$-set. This ensures that we never query the same $k$-set twice
from different $j$-sets, and therefore the outcome of each query is independent of all other queries.

Whenever a new $j$-set becomes active, it is added to the end of the current queue.
Since we are considering a depth-first search, we will always query $k$-sets from
the \emph{last} active $j$-set in the queue.
Whenever the queue of active $j$-sets is empty (so also the current path is empty),
we choose a new neutral $j$-set from which to continue uniformly at random, and this $j$-set becomes active.

A formal description of the \pathfinder\ algorithm can be found in~\cite{CGHKSZ20}.

Let us observe that in the algorithm, whenever we find an edge from a $j$-set with extendable partition $(C_0,\ldots,C_r)$,
$\binom{|C_1|}{a} = \binom{k-j}{a}$ new $j$-sets become active.
Heuristically, towards the start of the process we will query approximately $\binom{n-v_\ell}{k-j}$ many $k$-sets from a $j$-set,
where $\ell$ is the current length of the path
(and recall that $v_\ell = j+\ell(k-j)$ denotes the number of vertices in a path of length $\ell$).
This gives a clear intuition for why we should find a path of length
$L_1(p)$: the expected number of $j$-sets that become active from
any current $j$-set is approximately
$$
\binom{k-j}{a}\binom{n-v_\ell}{k-j} p = \left(1+o(1)\right)\left(1-\frac{\ell(k-j)}{n}\right)^{k-j} c.
$$
When $\ell=L_1 = \frac{1-c^{-1/(k-j)}}{k-j} \cdot n$, up to the $1+o(1)$ error term this gives precisely~$1$---in other words, $L_1$
is the length at which this process changes from being supercritical to subcritical.

The main difficulty in the proof comes in the approximation of the number of $k$-sets that we query
from each $j$-set, which above we estimated by $\binom{n-v_\ell}{k-j}$.
In fact, this is an obvious \emph{upper} bound, whereas we need a \emph{lower} bound.
The upper bound takes account of $k$-sets that may not be queried because they contain a vertex
from the current path, but $k$-sets may also be forbidden because they contain another active or explored $j$-set
(apart from the one we are currently querying from).

We call a $j$-set \emph{discovered} if it is either active or explored. The set $\Gdisc = \Gdisc(t)$ of discovered
$j$-sets at time $t$ may be thought of as the edge set of a $j$-uniform hypergraph. It is intuitive that at the
start of the search process (i.e.\ for small $t$), this hypergraph is sparse, but we need to quantify this more precisely.
Given $0 \le i \le j-1$, let $\Delta_i(t)=\Delta_i(\Gdisc(t))$ denote the maximum $i$-degree of $\Gdisc(t)$,
that is the maximum over all $i$-sets $I$ of the number of $j$-sets of $\Gdisc(t)$ that contain $I$.
(Note in particular that $\Delta_0(t) = |\Gdisc(t)|$.)
The purpose of this parameter is highlighted in the following proposition.
\begin{proposition}\label{prop:dfs:manyqueriesfromJ}
Suppose that a $j$-set $J$ becomes active when the length of the path is $\ell=\ell_J$
and that $n-v_{\ell_J} = \Theta(n)$.
Then the number of $k$-sets that are eligible to be queried from $J$ at time $t$ is at least
$$
\left(1-\sum_{i=0}^{j-1}O\left(\frac{\Delta_i(t)}{n^{j-i}}\right)\right)\binom{n-v_{\ell_J}}{k-j}.
$$
\end{proposition}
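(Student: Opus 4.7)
The plan is to fix the $j$-set $J$ and count directly those $k$-sets $K$ with $J \subset K$ that satisfy both eligibility constraints: (i) $K$ contains no vertex of the current path other than the vertices of $J$, and (ii) $K$ contains no discovered $j$-set other than $J$.

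Condition (i) alone gives exactly $\binom{n-v_{\ell_J}}{k-j}$ choices, since the $k-j$ vertices of $K\setminus J$ must be chosen from the $n-v_{\ell_J}$ vertices outside the current path. This already accounts for the prefactor on the right-hand side of the proposition, so it remains to subtract those $k$-sets which additionally violate condition (ii); call such $k$-sets \emph{bad}.

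To bound the number of bad $k$-sets, I would stratify by $i := |J\cap J'| \in \{0,1,\ldots,j-1\}$, where $J' \in \Gdisc(t)\setminus\{J\}$ is a witnessing discovered $j$-set. For each such $i$, the number of $J' \in \Gdisc(t)$ with $|J\cap J'| = i$ is at most $\binom{j}{i}\Delta_i(t)$: there are $\binom{j}{i}$ choices for the $i$-subset $J\cap J'$ of $J$, and each such $i$-set is contained in at most $\Delta_i(t)$ discovered $j$-sets. For each such $J'$, the number of $k$-sets containing $J\cup J'$ is at most $\binom{n-(2j-i)}{k-(2j-i)}$. Summing over $i$ yields the upper bound
$$
\sum_{i=0}^{j-1}\binom{j}{i}\,\Delta_i(t)\,\binom{n-(2j-i)}{k-(2j-i)}
$$
on the number of bad $k$-sets.

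The final step is a routine comparison of binomial coefficients. Since $n-v_{\ell_J} = \Theta(n)$, we have $\binom{n-v_{\ell_J}}{k-j} = \Theta(n^{k-j})$ and $\binom{n-(2j-i)}{k-(2j-i)} = \Theta(n^{k-2j+i})$, so the ratio is $\Theta(n^{i-j})$, and each summand above is $O(\Delta_i(t)/n^{j-i})\binom{n-v_{\ell_J}}{k-j}$. Subtracting from $\binom{n-v_{\ell_J}}{k-j}$ yields the stated lower bound. There is no substantive obstacle here; the argument is a careful double-count, and the only place requiring attention is the overlap bookkeeping between $J$ and other discovered $j$-sets, which is cleanly parameterised by $i = |J\cap J'|$.
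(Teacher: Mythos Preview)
Your argument is correct and essentially identical to the paper's own proof: both stratify the forbidden $k$-sets by the intersection size $i=|J\cap J'|$, bound the number of such $J'$ by $\binom{j}{i}\Delta_i(t)$, and compare the count of $k$-sets containing $J\cup J'$ against $\binom{n-v_{\ell_J}}{k-j}=\Theta(n^{k-j})$. The only cosmetic difference is that you use the exact binomial $\binom{n-(2j-i)}{k-(2j-i)}$ where the paper uses the cruder upper bound $n^{k-2j+i}$; both yield the same $O(\Delta_i(t)/n^{j-i})$ estimate.
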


\begin{proof}
Let us consider how many $k$-sets may not be queried from a $j$-set $J$
because they contain a second, already discovered $j$-set $J'$. We will make a case distinction based on the possible
intersection size $i= |J \cap J'| \in [j-1]_0$, and note that for each $i\in [j-1]_0$, the number of discovered $j$-sets $J'$ which
intersect $J$ in $i$ vertices is at most $\binom{j}{i} \Delta_i(t)$, and the number of $k$-sets that are forbidden
because they contain both $J$ and $J'$ is (crudely) at most $n^{k-2j+i}$.
Therefore the number of forbidden $k$-sets is certainly at most
\begin{align*}
\sum_{i=0}^{j-1} \binom{j}{i} \Delta_i(t) n^{k-2j+i}
& = \sum_{i=0}^{j-1} O\left(\frac{\Delta_i(t)}{n^{j-i}}\right) \binom{n-v_{\ell_J}}{k-j},
\end{align*}
where the approximation follows because $\binom{n-v_{\ell_J}}{k-j} = \Theta(n^{k-j})$.
\end{proof}

It follows from this proposition that
if $\Delta_i(t) \ll n^{j-i}$ for each $i$, the number of forbidden $k$-sets is insignificant
compared to the number of $k$-sets that may be queried, and the calculation above will go through with
the addition of some smaller order error terms.

We will therefore run the \pathfinder\ algorithm until one of the three stopping conditions is satisfied.
Let us fix a constant $0<\eps \ll \delta$ and further constants $1 \ll c_0 \ll c_1 \ll \ldots \ll c_{j-1} \ll 1/\sqrt{\eps}$.

\begin{enumerate}[(DFS1)]
\item \label{dfsstop:length} $\ell = (1-\delta/3)L_1$;
\item \label{dfsstop:time} $t=\eps^2 n^k =:t_0$;
\item \label{dfsstop:degree} $\Delta_i(t) \ge \eps c_i n^{j-i}$ for some $0 \le i \le j-1$.
\end{enumerate}

Now our goal is simply to show that whp the algorithm terminates when~\ref{dfsstop:length} is invoked.
As such, we have two main auxiliary results.

\begin{proposition}\label{prop:dfsstop:time}
Whp~\ref{dfsstop:time} is not invoked.
\end{proposition}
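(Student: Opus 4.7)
The plan is to show that stopping condition~\ref{dfsstop:length} whp fires well before time $t_0$, so~\ref{dfsstop:time} cannot be the one to trigger. The intuition is that $t_0 = \eps^2 n^k$ is far larger than the number of queries actually needed to build a path of length $(1-\delta/3)L_1$, because the branching process underlying \pathfinder\ is supercritical with constant drift throughout the regime $\ell < L_1$.

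Assume for contradiction that~\ref{dfsstop:time} fires first, so at every $t' \le t_0$ we have $\ell(t') < (1-\delta/3)L_1$ and $\Delta_i(t') < \eps c_i n^{j-i}$ for every $i \in [j-1]_0$. By Proposition~\ref{prop:dfs:manyqueriesfromJ} together with $\eps c_i \ll 1$, at every head $j$-set $J$ of the queue the number of eligible $k$-sets is at least $(1-o(1))\binom{n-v_{\ell_J}}{k-j}$. Since each query is an independent $\Be(p)$ trial, the expected number of new active $j$-sets generated by a complete exploration of $J$ is
\[
\binom{k-j}{a}\binom{n-v_{\ell_J}}{k-j}\,p\cdot(1-o(1)) = c_n\Bigl(1-\tfrac{v_{\ell_J}}{n}\Bigr)^{k-j}(1-o(1)) \ge 1+\Omega(\delta),
\]
where the last step uses $\ell_J < (1-\delta/3)L_1$ together with the defining property of $L_1$, which makes the expression equal to $c\cdot c^{-1}=1$ precisely at $\ell=L_1$.

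Next I would couple \pathfinder\ with a positive-drift random walk that tracks $\ell(t)$: its increments correspond to complete explorations of a single $j$-set, and the estimate above makes the mean step size at least $\Omega(\delta)$ throughout the regime $\ell<(1-\delta/3)L_1$, with bounded variance. A standard Chernoff/Azuma-type argument (applied conditional on the history and on neither~\ref{dfsstop:length} nor~\ref{dfsstop:degree} having fired) then gives that whp within $O(n/\delta^2)$ exploration events the path length reaches $(1-\delta/3)L_1$, triggering~\ref{dfsstop:length}. Each exploration event queries at most $\binom{n-j}{k-j}=O(n^{k-j})$ $k$-sets, so~\ref{dfsstop:length} fires within $O(n^{k-j+1}/\delta^2) = O(n^{k-1}/\delta^2) = o(\eps^2 n^k) = o(t_0)$ total queries (using $j\ge 2$ in the second equality), contradicting our assumption.

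The main obstacle is the coupling step: \pathfinder\ is not literally a random walk on path length due to the batch structure (multiple siblings at each level of the DFS tree) and the resulting non-trivial effect of backtracking, so some care is needed to define a suitable supermartingale and to verify that conditioning on the bounded-degree event $\{\Delta_i < \eps c_i n^{j-i}\}$ preserves enough independence across exploration events. This is precisely what is carried out in~\cite{CGHKSZ20} for the special case $p=(1+\eps)p_0$; the argument here is essentially identical, with the constant $c>1$ playing the role of $1+\eps$.
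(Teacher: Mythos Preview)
Your approach and the paper's are genuinely different. The paper gives a direct counting argument: assuming~\ref{dfsstop:time} is invoked, at time $t_0 = \eps^2 n^k$ a Chernoff bound gives whp at least $(1-\delta^3)t_0 p$ edges discovered, hence at least $(1-\delta^3)t_0 p\binom{k-j}{a} = \Theta(\eps^2 n^j)$ many $j$-sets have become active. Since the number of \emph{currently} active $j$-sets is at any time $O(L_1) = O(n) \ll \eps^2 n^j$ (this is where $j\ge 2$ is used), almost all of these are already fully explored. By Proposition~\ref{prop:dfs:manyqueriesfromJ} together with $\ell < (1-\delta/3)L_1$ throughout, each explored $j$-set accounts for at least $(1+\delta^2)\binom{n-v_{L_1}}{k-j}$ queries; multiplying out gives at least $(1+\delta^2/3)t_0$ queries in total, contradicting $t=t_0$.

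Your route is morally correct but, as you yourself flag, formalising a random walk directly on $\ell(t)$ is awkward: in DFS the exploration of a $j$-set is interleaved with that of its descendants, so ``mean step size of $\ell$ per complete exploration'' is not a cleanly defined increment. The natural quantity to track is rather $|A_t|$ (using $\ell \ge |A_t|/\binom{k-j}{a}$), which has transparent positive drift per explored $j$-set---but once one sets this up and invokes the deterministic bound $|A_t|=O(n)$, one has essentially reproduced the paper's counting argument in different packaging. The paper's version is shorter and sidesteps the coupling entirely; note also that since this paper explicitly follows~\cite{CGHKSZ20} for Lemma~\ref{lem:longpath}, that reference most likely uses the same counting argument rather than the martingale-on-$\ell$ formulation you attribute to it.
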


\begin{lemma}\label{lem:dfsstop:degree}
Whp~\ref{dfsstop:degree} is not invoked.
\end{lemma}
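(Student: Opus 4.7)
The plan is to prove, by induction on $i$, the stronger claim that whp $\Delta_i(t) < \eps c_i n^{j-i}$ for every $t \le t_0 = \eps^2 n^k$ and every $i \in \{0,\ldots,j-1\}$. The natural device is the stopping time
$$
\tau := \inf\bigl\{ t \ge 0 : \Delta_{i'}(t) \ge \eps c_{i'} n^{j-i'} \text{ for some } i' \in \{0,\ldots,j-1\} \bigr\},
$$
so it suffices to show $\Pr(\tau \le t_0) = o(1)$; on the event $\{t < \tau\}$ all the degree bounds are in force and can be substituted into subsequent estimates.

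For the base case $i=0$: since \pathfinder\ queries each $k$-set at most once and the outcomes are independent $\Be(p)$ random variables, a Chernoff bound yields $O(t_0 p) = O(\eps^2 n^j)$ edges discovered whp, each contributing $\binom{k-j}{a} = O(1)$ new active $j$-sets. The number of restarts is also $O(\eps^2 n^j)$, since each failed restart consumes $\Omega(n^{k-j})$ queries, so $\Delta_0(t_0) = |\Gdisc(t_0)| = O(\eps^2 n^j) \ll \eps c_0 n^j$ once $c_0$ is chosen large enough.

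For the inductive step, I would fix an $i$-set $I$ and track $N_I(t) := |\{J \in \Gdisc(t) : I \subseteq J\}|$. Each such $J$ is either a restart-seed (contributing $O(\eps^2 n^{j-i})$ whp by Chernoff on a $\Bi(O(\eps^2 n^j), O(n^{-i}))$) or is born from a successful query pair $(J_{\mathrm{par}},K)$; in the latter case, since the new $j$-set lies inside $K$ and contains $I$, we have $I \subseteq K$. Splitting edge-contributions by $i' := |I \cap J_{\mathrm{par}}|$, the number of candidate pairs with $I \subseteq K$ and $|I \cap J_{\mathrm{par}}| = i'$ is at most $\binom{i}{i'}\Delta_{i'}(t)\cdot O(n^{k-j-(i-i')})$, which is $O(\eps c_{i'} n^{k-i})$ on $\{t \le \tau\}$ by the stopping-time bounds. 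Summing over $i'$ and applying Chernoff to the independent $\Be(p)$'s yields $O(\eps c_i n^{j-i})$ successful queries whp, each producing $O(1)$ new $I$-containing $j$-sets. A union bound over $\binom{n}{i} \le n^i$ choices of $I$ and over $t \in [t_0]$ then closes the argument, since each Chernoff failure probability is $n^{-\omega(1)}$.

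The main obstacle will be the self-referential $i' = i$ term, where the bound on $\Delta_i$ feeds back into the estimate for $N_I$. The stopping time $\tau$ is what makes this rigorous: on $\{t \le \tau\}$ one may treat $\Delta_i(t) < \eps c_i n^{j-i}$ as a fixed known bound on the right-hand side, and then verify that the Chernoff-derived bound on the left-hand side is strictly below the same threshold. The hierarchy $\eps \ll 1/c_{j-1} \ll \ldots \ll 1/c_0 \ll 1$ is exactly what allows the combinatorial constants arising from $c$ and $\binom{k-j}{a}$ to be absorbed at each level of the induction.
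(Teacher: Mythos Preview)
Your decomposition by $i' = |I \cap J_{\mathrm{par}}|$ is exactly the right one, and for $i' \le i-1$ (what the paper calls \emph{jumps}) your bound goes through. The gap is in the self-referential term $i'=i$ (the \emph{pivots}), and the stopping-time trick does not close it.

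Concretely: on $\{t < \tau\}$ the number of $j$-sets $J_{\mathrm{par}} \in \Gdisc(t)$ with $I \subseteq J_{\mathrm{par}}$ is at most $\eps c_i n^{j-i}$, and from each such $J_{\mathrm{par}}$ you make at most $\binom{n}{k-j}$ queries. So the number of pivot queries is at most $\eps c_i n^{j-i}\binom{n}{k-j}$, and the expected number of successful ones is at most $\eps c_i n^{j-i} \binom{n}{k-j} p = c\,\eps c_i n^{j-i}/\binom{k-j}{a}$. Each success can create up to $\binom{k-j}{a}$ new $j$-sets containing $I$ (this really happens, e.g.\ when $I \subseteq C_2 \cup \dots \cup C_r$ in the extendable partition of $J_{\mathrm{par}}$). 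Hence the Chernoff-derived bound on the pivot contribution to $N_I$ is $(1+o(1))\,c\,\eps c_i n^{j-i}$, and since $c>1$ this already exceeds the target $\eps c_i n^{j-i}$ before you even add the jump and restart contributions. The hierarchy $c_0 \ll c_1 \ll \dots \ll c_{j-1}$ lets you absorb constants when passing from level $i-1$ to level $i$; it does nothing for a constant multiplying $c_i$ itself.

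What you are missing is a structural input from the DFS. Because \pathfinder\ is building a single path, once the end $j$-set contains $I$ it can do so for at most $\lceil (k-i)/(k-j)\rceil \le k$ consecutive edges before the path has moved past $I$ entirely (each new edge shifts the end $j$-set by $k-j$ fresh vertices). So between any two jumps or new starts at $I$ there are at most $k$ pivots, and the total pivot contribution is $O(1)$ times the jump-plus-restart contribution, i.e.\ $O(\eps c_{i-1} n^{j-i})$. This breaks the circularity and is the essential idea your argument needs.
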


We note that Lemma~\ref{lem:dfsstop:degree} is a form of \emph{bounded degree lemma}
similar to the one first proved in~\cite{CooleyKangPerson18} and subsequently used in
one form or another in~\cite{CGHKSZ20,CooleyKangKoch16,CooleyKangKoch19}.
A far stronger form also appeared in~\cite{CooleyKangKoch18}.
In its original form, the bounded degree lemma roughly states that no $i$-degree
is larger than the average $i$-degree by more than a bounded factor.
The stronger form in~\cite{CooleyKangKoch18} even provides a lower bound, showing
that whp all $i$-degrees are approximately equal, a phenomenon we call \emph{smoothness}.

For our purposes we need only the upper bound, and allow a deviation from the average of $\Theta(c_i/\eps)$.
This could certainly be improved, and it seems likely that even smoothness is satisfied, but since
we do not require an especially strong result for this paper,
for simplicity we make no effort to optimise the parameters.

\begin{proof}[Proof of Proposition~\ref{prop:dfsstop:time}]
Let us suppose (for a contradiction) that at time $t_0= \eps^2 n^k$, neither~\ref{dfsstop:length} nor~\ref{dfsstop:degree} has been invoked.
Since~\ref{dfsstop:degree} has not been invoked, we have $\Delta_i \le \eps c_i n^{j-i} \le \sqrt{\eps} n^{j-i}$ for each $i \in [j-1]_0$,
and so by Proposition~\ref{prop:dfs:manyqueriesfromJ},
from each explored $j$-set we certainly made at least
$$
(1-O(\sqrt{\eps}))\binom{n-v_{(1-\delta/3)L_1}}{k-j} \ge (1+\delta^2) \binom{n-v_{L_1}}{k-j}
$$
queries. We also observe that at time $t_0$ the number of edges we have discovered is distributed as
$\Bi(t_0,p)$, which has expectation $t_0p = \Theta(\eps^2 n^j)$. By a Chernoff bound, whp we have discovered
at least $(1-\delta^3)t_0p$ edges, and therefore at least $(1-\delta^3)t_0p \binom{k-j}{a}$ many $j$-sets have become active.
At any time, the number of currently active $j$-sets is $O(L_1) = O(n)$, and therefore the number of fully explored $j$-sets
is at least 
$$
(1-\delta^3)t_0p \binom{k-j}{a} - O(n) \ge (1-\delta^2/2)t_0p\binom{k-j}{a},
$$
since $t_0p = \Theta(\eps^2 n^j) \gg n$.

Thus the total number of queries made by time $t_0$ is at least
$$
(1-\delta^2/2)t_0p\binom{k-j}{a} (1+\delta^2) \binom{n-v_{L_1}}{k-j}
\ge (1+\delta^2/3)t_0,
$$
which is clearly a contradiction since by definition we have made precisely $t_0$ queries.
\end{proof}

\begin{proof}[Proof outline of Lemma~\ref{lem:dfsstop:degree}]
We give only an outline of the proof here to introduce the main ideas.
An essentially identical argument was used to prove~\cite[Lemma~34]{CGHKSZ20}.

First consider the case
$i=0$, when the desired bound follows from the fact that, by a Chernoff bound we have found at most
$2pt_0 = O(\eps^2 n^j)$ edges, each of which leads to $O(1)$ many $j$-sets becoming active.
Some further $j$-sets may also become active without finding an edge each time the queue of active $j$-sets is empty and we pick 
a new $j$-set from which to start. It is easy to bound the number of times this happens
by $O(t_0 n^{j-k}) = \Theta(\eps^2 n^j)$ (see the argument for ``new starts'' below).

Now given $i \in [j-1]$ and an $i$-set $I$, there are three ways in which
the degree of $i$ in $\Gdisc$ could increase.
\begin{itemize}
\item A \emph{new start} at $I$ occurs when the current path is fully explored and we pick a new (ordered) $j$-set from which
to start a new exploration process. If this $j$-set contains $I$, then the degree of $I$ increases by $1$.
\item A \emph{jump} to $I$ occurs when a $k$-set containing $I$ is queried from a $j$-set not containing $I$ and this $k$-set is indeed an edge.
Then the degree of $I$ increases by at most $\binom{k-j}{a}$.
\item A \emph{pivot} at $I$ occurs when an edge is discovered from a $j$-set already containing $I$.
Then the degree of $I$ increases by at most $\binom{k-j}{a}$.
\end{itemize}

We bound the contributions to the degree of $I$ made by these three possibilities separately.

\subsubsection*{New starts} We can crudely bound the number of new starts by observing that
for each \emph{starting} $j$-set we must certainly have made at least $\Theta(n^{k-j})$ queries
to fully explore it,
and therefore at time $t$ we can have made at most $\Theta(t n^{j-k})$ many new starts in total
(when $t \ge n^{k-j}$).
Since we chose the $j$-set for a new start uniformly at random, the probability that such a new start
contains $I$ is $\Theta(n^{-i})$, and the probability that the number of new starts at $I$
by time $t_0=\eps^2 n^k$
is larger than twice its expectation (which itself is $\Theta (t_0 n^{j-k-i})=\Theta(\eps n^{j-i}) \ge \sqrt{n}$) is exponentially small.

\subsubsection*{Jumps}

We further subclassify jumps according to the size $z$ of the intersection $I\cap J$
between $I$ and the $j$-set $J$ from which the jump to $I$ occurs.
Observe that since for a jump we cannot have $I \subset J$, we must have $0 \le z \le i-1$.
The number of $j$-sets of $\Gdisc(t)$ with intersection of size $z$ with $I$ is at most $\binom{i}{z}\Delta_z(t) = O(\Delta_z(t)) = O(\eps c_z n^{j-z})$,
where for the last approximation we used condition~\ref{dfsstop:degree} with $z$ in place of $i$.
For each such $j$-set $J$, the number of $k$-sets which contain both $J$ and $I$, and which could therefore result in a jump to $I$,
is at most $\binom{n}{k-j-i+z} = O(n^{k-j-i+z})$.

Thus the total number of queries made which could result in jumps to $z$ is at most
$\sum_{z=0}^i O(\eps c_z n^{j-z}) \cdot O(n^{k-j-i+z}) = O(\eps c_{i-1} n^{k-i})$.
Each such query gives a jump with probability $p=O(n^{k-j})$,
and a Chernoff bound implies that the number of jumps is $O(\eps c_{i-1} n^{j-i})$.
Since each jump contributes at most $\binom{k-j}{a}=O(1)$
to the degree of $I$, the total contribution made by jumps is $O(\eps c_{i-1} n^{j-i})$.

\subsubsection*{Pivots}

We observe that from any $j$-set containing $I$, the expected number of pivots at $I$ is at most
$\binom{n}{k-j}p = O(1)$. Furthermore since we are studying a DFS process creating a path,
the number of consecutive pivots at $I$ can be at most $\frac{k-i}{k-j} \le k$ before
the path has left $I$. Since the number of new starts and jumps to $I$ is $O(\eps c_{i-1} n^{j-i})$,
it follows that also whp the number of pivots at $I$ is $O(\eps c_{i-1} n^{j-i})$.

\vspace{0.5cm}

Now we have bounded the contribution to the degree of $I$ made by each of the three possibilities
as $O(\eps c_{i-1} n^{j-i})$, and summing these three terms, together with the fact that $c_{i-1} \ll c_i$, gives the desired result.
\end{proof}

\section{Breadth-first search: Proof of Lemma~\ref{lem:manyends}}\label{sec:manyends}

In this section we aim to show how we can use the single long path
guaranteed whp by Lemma~\ref{lem:longpath} and extend it using a breadth-first search process
to a family of paths with many ends, as required by Lemma~\ref{lem:manyends}.

\subsection{The BFS algorithm: \frayalg}

\subsubsection{Motivation and setup}
We will use Lemma~\ref{lem:longpath} as a black box, and
let $P_0'$ be some path of length $(1-\delta/2)L_1+2(\log n)^2 \le (1-\delta/3)L_1$,
which is guaranteed to exist whp.
Let $P_0$ be the subpath of length $(1-\delta/2)L_1$ obtained by removing
$(\log n)^2$ edges from each end of $P_0'$.
Furthermore, let $\cJ_1,\cJ_2$ be the collections of $(\log n)^2$ many $j$-sets
which are ends of a subpath of $P_0'$, but not of $P_0$, divided naturally into two
collections according to which end of $P_0'$ they are closest to.

Our aim is to start two breadth-first processes starting at $\cJ_1,\cJ_2$ to extend $P_0$,
and to show that these processes quickly grow large. This fact in itself would be easy to prove
by adapting the proof strategy from Section~\ref{sec:longpath},
since the length of $P_0$ is such that the processes are (just) supercritical, and intuitively
we only need a logarithmic number of steps to grow to polynomial size.

More delicate, however,
is to show that the search process produces path ends that are compatible with each other,
in the sense that there are many choices of pairs of ends between which we have a path.
In order to construct compatible sets of ends, having run the algorithm once
to find augmenting paths at one end, we will have a set $\Forb$ of \emph{forbidden vertices};
roughly speaking, these are vertices which lie in too many of the augmenting paths
from the first application of the algorithm,
and therefore we would like to avoid them when constructing augmenting paths
at the other end.

\subsubsection{Informal description}

Let us first describe the algorithm informally. We will start with 
paths $P_0 \subset P_0'$ and a set of ends $\cJ$ (which will be either $\cJ_1$
or $\cJ_2$). These ends come with the natural extendable partition induced by $P_0'$.
As in the DFS algorithm, we will label $j$-sets as \emph{neutral}, \emph{active}
or \emph{explored}. Initially the $j$-sets of $\cJ$ are active and all others are neutral.

At each time $t$ we will \emph{query} a $k$-set containing an active $j$-set $J$ to determine whether it is an edge.
If it is, then $\binom{k-j}{a}$ new $j$-sets are potential ends with which we can extend the
path from $J$, and these become active, also inheriting an appropriate extendable partition.
In order to ensure that we are always creating a path, we will forbid queries of $k$-sets
which contain vertices of the path ending in $J$ (except the vertices of $J$ itself).
We will also forbid $k$-sets with vertices from the forbidden set~$\Forb$. Finally, to ensure independence
of the queries we will forbid $k$-sets which contain some explored $j$-set.
(Note that because we are using a breadth-first search, we do not need to exclude other active
$j$-sets $J'$ because $J'$ will be dealt with later once $J$ is explored, and such $k$-sets will be forbidden from $J'$ because
they contain $J$.)

We will proceed in a standard BFS manner, i.e.\ from the first active $j$-set in the queue
we will query all permissible $k$-sets, and any new $j$-sets we discover are added
to the end of the queue.

We note that during the BFS process, the $j$-sets which become explored
including those in $\cJ_1 \cup \cJ_2$, are certainly ends of a path containing $P_0$,
and therefore candidates in our later sprinkling step. Since we have used Lemma~\ref{lem:longpath}
as a black box, and consider the BFS algorithm as a fresh start, we initially have a blank slate
of explored $j$-sets, and therefore any $j$-set which is explored or active during the new process
is an appropriate end.

\subsubsection{Formal description}

Given a path $P$ and a $j$-set $J$ which is an end of some subpath $P'$ of $P$,
let us denote by $\IndPart{J}{P}$ the extendable partition of $J$ which is naturally
induced by the path $P'$.
We will also denote by $P|_J$ the longer of the two maximal subpaths of $P$ ending in $J$.

The formal description of the \frayalg\ algorithm appears below.

\renewcommand{\thealgocf}{}

\SetAlFnt{\footnotesize}
\begin{algorithm}
	\DontPrintSemicolon
	\KwIn{Integers $k, j$ such that $1\le j \le k-1$.}
	\KwIn{$H$, a $k$-uniform hypergraph.}
	\KwIn{Paths $P_0\subset P_0'$, set of $j$-sets $\cJ \in P_0'\setminus P_0$}
	\KwIn{$\Forb$, a set of forbidden vertices of size at most $\delta^2 n$}
	Let $a \in [k-j]$ be such that $a \equiv k \bmod (k-j)$\;
	Let $r=\lceil \frac{j}{k-j}\rceil-1$\;
	$A \gets \cJ$ ordered lexicographically \tcp*{active $j$-sets}
	$N \gets \binom{V(H)}{j} \setminus(\cJ_1\cup \cJ_2)$ \tcp*{neutral $j$-sets}
	$E \gets \emptyset$ \tcp*{explored $j$-sets}
	\ForAll{$J \in \cJ$}{
	$P_J \gets P_0'|_J$ \tcp*{current $j$-tight path to $J$}
	$\ell_J \gets |P_J|$ \tcp*{length of $P_J$}
	$\cP_J \gets \IndPart{J}{P_0'}$ \tcp*{extendable partition of $J$}
	}
	$t \gets 0$ \tcp*{``time'', number of queries made so far}
	
	\While{$A \neq \emptyset$}{
		Let $J$ be the first $j$-set in $A$\;
		Let $\cK$ be the set of $k$-sets $K \subset V(H)$ such that
			$K \supset J$,
			such that $K\setminus J$ is vertex-disjoint from $P_J$, from $P_0'$ and from $\Forb$,
			and such that $K$ does not contain any $J' \in E$\;
		\While{$\cK \neq \emptyset$}{
			Let $K$ be the first $k$-set in $\cK$ according to the lexicographic order\;
			$t \gets t+1$ \tcp*{a new query is made}
			\If(\tcp*[f]{``query $K$''}){$K \in H$}{
				Let $(C_0, C_1, \dotsc, C_r)$ be the extendable partition of~$J$\;
				\For{each $Z \in \binom{C_1}{a}$}{
					$J_Z \gets Z \cup C_2 \cup \dotsb \cup C_r \cup (K \setminus J)$ \tcp*{$j$-set to be added}
					$P_{J_Z} \gets P_J + K$ \tcp*{Path ending at $J_Z$}
					$\cP_{J_Z} \gets (Z, C_2, \dotsc, C_r, K \setminus J)$\tcp*{extendable partition}
					$\ell_{J_Z} \gets \ell_J + 1$ \;
					$A \gets A + J_Z$ \tcp*{$j$-set becomes active}
					$N \gets N-J_Z$ \tcp*{$j$-set is no longer neutral}
				}
			}
			$(A_t, E_t) \gets (A, E)$ \tcp*{update ``snapshot'' at time $t$}
			$\cK \gets \cK -K$ \tcp*{update $\cK$}
		}
		$E \gets E+J$ \tcp*{$J$ becomes explored}
		$A \gets A-J$ \tcp*{$J$ is no longer active}
		}
	\caption{\frayalg}
	\label{algorithm:dfs}
\end{algorithm}

We will run this algorithm twice, once with $\cJ = \cJ_1$ and once with $\cJ=\cJ_2$.
Of course, the two instances of the algorithm will not be independent of each other in general.
However, if we can show that each instance satisfies some desired properties whp,
a union bound shows that also whp both instances satisfy these properties.
We will subsequently show that the desired properties will be enough to combine the two outputs of the algorithm
in an appropriate way.

\subsection{Analysing the algorithm}

Let us define
$\ell_t := \max_{J\in A_t} \ell_J$.
Let us also fix a constant $0<\eps \ll \delta$ and further constants $1 \ll c_0 \ll c_1 \ll \ldots \ll c_{j-1} \ll 1/\eps$. As in the DFS algorithm,
let $\Gdisc(t):= A_t \cup E_t$, and we denote $\Delta_i(t):=\Delta_i(\Gdisc(t))$
for any $0\le i \le j-1$ and $t \in \NN$.
We will run the \frayalg\ algorithm until time $\Tstop$, the first time at which one of the following stopping conditions is satisfied.

\begin{enumerate}[(S1)]
\item \label{bfsstop:death} The algorithm has terminated.
\item \label{bfsstop:length} $\ell_t = \left(1-\delta/2 \right)L_1 + 2(\log n)^2 =:L_0$.
\item \label{bfsstop:degree} $\Delta_i(t) \ge \eps c_i n^{j-i}$ for some $i$.
\item \label{bfsstop:manyends} $|\Gdisc(t)| = \eps^2 n^j$.
\end{enumerate}

Our principal aim is to show that whp it is~\ref{bfsstop:manyends} that is invoked.
The following proposition will be critical.

\begin{proposition}\label{prop:bfs:manyqueriesfromJ}
For any $t\le \Tstop$, the number of $k$-sets which are eligible to be queried from an active $j$-set
is at least $\binom{n-v_{(1-\delta/3)L_1}}{k-j} = (1+\Theta(\delta)) \binom{n-v_{L_1}}{k-j}$.
\end{proposition}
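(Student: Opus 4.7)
The plan is to partition the $k$-sets $K \supset J$ that fail to be eligible into two categories: those for which $K \setminus J$ meets the ``forbidden'' vertex set $S := V(P_J) \cup V(P_0') \cup \Forb$, and those avoiding $S$ but containing some second explored $j$-set $J' \in E$. Bounding each category separately and subtracting from the total $\binom{n-j}{k-j}$ should leave at least $\binom{n - v_{(1-\delta/3)L_1}}{k-j}$ eligible $k$-sets.

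For the first category, I would bound $|S|$. The \frayalg\ algorithm forces every edge added to $P_J$ beyond $P_0'|_J$ to contribute $k-j$ vertices disjoint from $V(P_0')$, and $P_J$ is extended at most $\ell_J - |P_0'|_J| \le 2(\log n)^2$ times before the stopping condition~\ref{bfsstop:length} fires. Together with $|V(P_0')| = v_{L_0}$, this yields
\[
|V(P_J) \cup V(P_0')| \le v_{L_0} + 2(k-j)(\log n)^2.
\]
Adding $|\Forb| \le \delta^2 n$ gives $|S| \le v_{L_0} + \delta^2 n + o(n)$. A direct computation shows
\[
v_{(1-\delta/3)L_1} - v_{L_0} = \left(\tfrac{\delta}{6}L_1 - 2(\log n)^2\right)(k-j) = \Theta(\delta n),
\]
and this slack absorbs $\delta^2 n + o(n)$ for $\delta$ sufficiently small. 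Hence $|S| \le v_{(1-\delta/3)L_1} - \Theta(\delta n)$, so the number of $k$-sets $K \supset J$ with $K \setminus J$ disjoint from $S$ is at least $\binom{n - v_{(1-\delta/3)L_1}}{k-j} + \Theta(\delta n^{k-j})$.

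For the second category, I would mimic the argument of Proposition~\ref{prop:dfs:manyqueriesfromJ}. For each intersection size $i \in [j-1]_0$, there are at most $\binom{j}{i}\Delta_i(t)$ choices of $J' \in E$ with $|J \cap J'| = i$, and for each such $J'$ there are at most $n^{k-2j+i}$ $k$-sets containing $J \cup J'$. Since $t \le \Tstop$ and~\ref{bfsstop:degree} has not fired, $\Delta_i(t) \le \eps c_i n^{j-i}$, so this category contributes at most
\[
\sum_{i=0}^{j-1} O\bigl(\eps c_i n^{j-i}\bigr) \cdot O\bigl(n^{k-2j+i}\bigr) = O\bigl(\eps c_{j-1} n^{k-j}\bigr).
\]

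Combining these two bounds, the number of eligible $k$-sets is at least
\[
\binom{n - v_{(1-\delta/3)L_1}}{k-j} + \Theta(\delta n^{k-j}) - O\bigl(\eps c_{j-1} n^{k-j}\bigr) \ge \binom{n - v_{(1-\delta/3)L_1}}{k-j},
\]
provided the constants are chosen so that $\eps c_{j-1} \ll \delta$, which is consistent with the parameter hierarchy $\eps \ll \delta$ and $c_{j-1} \ll 1/\eps$ (and can be enforced by strengthening to, say, $c_{j-1} \le 1/\sqrt{\eps}$ with $\eps \ll \delta^2$). The main obstacle is purely bookkeeping: verifying that the $\Theta(\delta n)$ gap between $v_{L_0}$ and $v_{(1-\delta/3)L_1}$ really is large enough to swallow both the $\Forb$ contribution and, after translating into $k$-set counts, the explored-$j$-set disqualifications.
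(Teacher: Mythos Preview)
Your proposal is correct and follows essentially the same approach as the paper's proof, which also separates the vertex restrictions (from $P_J$, $P_0'$, and $\Forb$) from the explored-$j$-set restrictions and then absorbs both losses into the $\Theta(\delta n)$ gap between $v_{L_0}$ and $v_{(1-\delta/3)L_1}$. The paper is terser---it packages the outcome as $(1-O(\eps))\binom{n-|\Forb|-v_{L_0}}{k-j}$ by invoking the analogue of Proposition~\ref{prop:dfs:manyqueriesfromJ} and then compares---but your additive decomposition and your more explicit attention to whether $\eps c_{j-1}\ll\delta$ amount to the same argument.
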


\begin{proof}
We first observe that an essentially identical proof to that of Proposition~\ref{prop:dfs:manyqueriesfromJ}
shows the analogous result in this case: here we have that $\ell_J \le L_0$ and $\Delta_i(E_t) \le \eps n^{j-i}$ because
the stopping conditions \ref{bfsstop:length} and~\ref{bfsstop:degree} have not been invoked,
and we obtain that the number eligible $k$-sets is at least
$(1-O(\eps))\binom{n-|\Forb| - v_{L_0}}{k-j}$.
It remains only to observe that
$$
(1-O(\eps))\binom{n-|\Forb| - v_{L_0}}{k-j} \ge \binom{n-v_{(1-\delta/3)L_1}}{k-j},
$$
which holds since $|\Forb| \le \delta^2 n$, since $L_1 = \Theta(n)$ and since $\eps \ll \delta \ll 1$.
\end{proof}

\begin{claim}\label{claim:bfsstop:deathtime}
Whp~\ref{bfsstop:death} is not invoked.
\end{claim}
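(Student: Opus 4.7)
The plan is to interpret the BFS as a random walk on the queue size $|A|$ with supercritical drift, and to use the large initial queue $|\cJ| = (\log n)^2$ to rule out extinction.

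Let $Y_s$ be the number of $j$-sets appended to $A$ when the $s$-th active $j$-set is processed, and let $W_m := \sum_{s=1}^m (Y_s - 1)$, so the queue size after $m$ processing steps equals $|\cJ| + W_m = (\log n)^2 + W_m$. The event~\ref{bfsstop:death} is invoked precisely at the first $m$ with $W_m = -(\log n)^2$. By Proposition~\ref{prop:bfs:manyqueriesfromJ}, for any $t < \Tstop$ each processing step queries at least $M := (1+\Theta(\delta))\binom{n - v_{L_1}}{k-j}$ many $k$-sets; the algorithm's exclusion of $k$-sets containing any $J' \in E$ ensures that the Bernoulli trials at distinct processing steps live on pairwise disjoint families of $k$-sets and are therefore independent conditional on the BFS history. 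Hence, letting $\mathcal{F}_{s-1}$ denote the history up to the end of step $s-1$,
$$
\EE[Y_s \mid \mathcal{F}_{s-1}] \;\ge\; \binom{k-j}{a} \cdot M p \cdot (1 - O(\eps)) \;\ge\; 1 + \eta
$$
for some $\eta = \Theta(\delta) > 0$. The leading-order factor follows from $p = c_n p_0$ and $v_{L_1}/n = 1 - c^{-1/(k-j)} + o(1)$; the $(1 - O(\eps))$ correction accounts for the possibility that one of the $\binom{k-j}{a}$ produced $j$-sets duplicates an already-discovered $j$-set, and is controlled by the degree estimates $\Delta_i(t) \le \eps c_i n^{j-i}$, which hold because~\ref{bfsstop:degree} is not yet invoked.

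It then remains to establish concentration. Since each $Y_s$ has bounded conditional variance (specifically $\mathrm{Var}(Y_s \mid \mathcal{F}_{s-1}) = O(1)$ because $Mp = O(1)$) and is stochastically dominated by $\binom{k-j}{a}\cdot \Bi(M, p)$---a variable with a well-behaved moment generating function---a standard exponential-martingale / Cram\'er--Chernoff argument applied to $e^{-\lambda W_m}$ with $\lambda = \Theta(\eta)$ sufficiently small yields
$$
\Pr\!\left(W_m \le -(\log n)^2\right) \;\le\; \exp\!\left(-\Omega\!\left(\eta (\log n)^2 + \eta^2 m\right)\right)
$$
uniformly in $m$. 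A union bound over $m \le \binom{n}{k}$ (a trivial upper bound on $\Tstop$) then gives $\Pr\!\left(\exists m \le \Tstop :\ W_m \le -(\log n)^2\right) = \exp(-\Omega((\log n)^2)) = o(1)$, which is exactly the claim.

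The main obstacle is the lack of genuine independence among the $Y_s$, which depend on the full BFS history through the set of eligible queries and the current $\Gdisc$. The first dependence is neutralised by the exclusion rule for explored $j$-sets, which forces the underlying Bernoulli trials at different processing steps to act on disjoint families of $k$-sets, giving true conditional independence; the second is absorbed into the $(1-O(\eps))$ correction via the degree bounds. With the conditional drift $\eta$ and the conditional variance control in hand, the exponential-martingale tail inequality goes through exactly as for an i.i.d.\ walk, and the fact that we start from $(\log n)^2$ many active $j$-sets (rather than from a single one) provides the polylogarithmic head-start that drives the probability of extinction to $o(1)$.
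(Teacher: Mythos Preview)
Your argument is correct and reaches the same conclusion, but the paper takes a more direct route. Instead of tracking the queue size as a supercritical random walk and invoking an exponential-martingale bound step by step, the paper works globally: if~\ref{bfsstop:death} were invoked then all $m := |\Gdisc(\Tstop)|$ discovered $j$-sets would be fully explored, so by Proposition~\ref{prop:bfs:manyqueriesfromJ} the total number of queries is at least $M = m(1+\Theta(\delta))\binom{n-v_{L_1}}{k-j}$, while the number of edges found is at most $m' = m/\binom{k-j}{a} \le (1-\delta^2)pM$. Since the number of edges among $t$ independent queries is distributed as $\Bi(t,p)$, a single Chernoff bound (together with a union bound over $t\le n^k$) shows this event has probability $\exp(-\Theta((\log n)^2))$, using that $M \ge (\log n)^2 \cdot \Theta(n^{k-j})$ because $m \ge |\cJ| = (\log n)^2$.

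Your random-walk condition $W_m \le -(\log n)^2$ unwinds to exactly the same inequality, ``too few edges for the number of queries made'', and your Cram\'er--Chernoff martingale bound is the conditional version of the paper's binomial Chernoff bound. The paper's packaging is shorter because it aggregates all queries into one binomial random variable and never needs to control individual $Y_s$; your step-by-step formulation, on the other hand, would transfer more easily to settings where the per-step drift is non-uniform or where one wants control of the whole trajectory rather than just the terminal event.
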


\begin{proof}
In order for~\ref{bfsstop:death} to be invoked, all $j$-sets which became active would need to be fully explored at time $\Tstop$.
Let $m:=\Gdisc(\Tstop) =  E_{\Tstop}$ denote the number of $j$-sets which became active (or were active initially).

By Proposition~\ref{prop:bfs:manyqueriesfromJ}, the algorithm has made at least
$M:=m \cdot (1+\Theta(\delta))\binom{n-v_{L_1}}{k-j}$ queries, from which we certainly discovered at most
$m':=m\binom{k-j}{a}^{-1}$ edges (since each edge gives rise to $\binom{k-j}{a}$ new active $j$-sets).
Thus we have
$$
M/m' = (1+\Theta(\delta)) \binom{n-v_{L_1}}{k-j}\binom{k-j}{a} \ge \frac{1}{(1-\delta^2)p},
$$
or in other words $m' \le (1-\delta^2)pM$.
Thus we have made at least $M$ queries during which we discovered at most $m'\le (1-\delta^2)pM$
edges. A Chernoff bound will show that this is very unlikely provided $pM$ is large enough.

More precisely,
note that since the $j$-sets of $\cJ$ were initially active,
we certainly have $m\ge |\cJ| \ge (\log n)^2$, and therefore $M \ge \Theta((\log n)^2 n^{k-j})$.
For any $t \ge \Theta((\log n)^2 n^{k-j})$, the probability that in the first $t$ queries we find at most
$(1-\delta^2)pt$ edges is at most $\exp(-\Theta(pt)) \le \exp(-\Theta((\log n)^2)) = o(n^{-k})$.
Therefore we may take a union bound over all times $t$ between $0$ and $n^k$ (which is a trivial
upper bound on the total number of queries that can be made) and deduce
that whp \ref{bfsstop:death} was not invoked during this time.
\end{proof}

\begin{proposition}\label{prop:bfsstop:length}
Whp \ref{bfsstop:length} is not invoked.
\end{proposition}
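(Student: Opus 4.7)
The plan is to argue by contradiction: suppose $\Tstop$ is a time at which \ref{bfsstop:length} is invoked while \ref{bfsstop:death}, \ref{bfsstop:degree} and \ref{bfsstop:manyends} are not. I will show that this would force $|\Gdisc(\Tstop)|$ to exceed $\eps^2 n^j$, contradicting the fact that \ref{bfsstop:manyends} has not fired first.

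The first step is to convert $\ell_{\Tstop}=L_0$ into a lower bound on the BFS depth. Each $j$-set $J_0 \in \cJ$ has $\ell_{J_0}$ bounded by $O((\log n)^2)$ at initialisation, since it sits close to an end of $P_0'$; and $\ell_J$ increments by exactly one at each BFS step. Hence an active $J$ with $\ell_J = L_0$ must lie at BFS depth at least $L_0 - O((\log n)^2) = \Omega(n)$ from its root in the BFS forest; moreover, because the algorithm is FIFO, every $j$-set at BFS depth strictly less than this has already been added to $\Gdisc$ by time $\Tstop$. The heart of the argument is then to show that the BFS grows geometrically in each layer: since \ref{bfsstop:degree} has not fired, Proposition~\ref{prop:bfs:manyqueriesfromJ} gives that from each active $j$-set there are at least $(1+\Theta(\delta))\binom{n-v_{L_1}}{k-j}$ eligible $k$-sets, each independently an edge with probability $p$, and combined with the identity $\binom{k-j}{a}\binom{n-v_{L_1}}{k-j}p = 1 + o(1)$ the expected number of new $j$-sets spawned by each processed $j$-set is at least $1 + \Theta(\delta)$. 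Writing $m_d$ for the number of $j$-sets at BFS depth exactly $d$ and using a Chernoff bound in each layer (together with a union bound over the $O(\log n)$ relevant layers), one gets whp $m_d \ge (\log n)^2(1+\Theta(\delta)/2)^d$ for all $d$ below the stopping time. Setting $d^* = \Theta(\log n/\delta) = O(\log n)$, the total $|\Gdisc|$ then exceeds $\eps^2 n^j$ by BFS depth $d^*$, so \ref{bfsstop:manyends} would have been invoked at BFS depth $d^*$, which is much less than the $\Omega(n)$ depth demanded by \ref{bfsstop:length}. This contradiction completes the proof.

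The main obstacle is the layer-by-layer concentration: because the set of eligible $k$-sets from each active $j$-set depends on the entire history of the BFS, the count of new $j$-sets per layer is not a sum of manifestly independent Bernoullis. As in the proof of Claim~\ref{claim:bfsstop:deathtime}, this can be circumvented by applying the Chernoff bound to the total binomial $\Bi(Q,p)$ of queries made up to each time $t$, with a union bound over $t \in [n^k]$; combined with the lower bound $m_0 = |\cJ| \ge (\log n)^2$ on the initial layer size, this gives enough slack to push the exponential growth statement through with failure probability $o(1)$, transferring the global ``queries vs.\ edges'' inequality into the layer-by-layer growth needed above.
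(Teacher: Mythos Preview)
Your overall strategy is the same as the paper's: track the BFS in generations, use Proposition~\ref{prop:bfs:manyqueriesfromJ} to show each generation grows by a factor $1+\Theta(\delta)$ in expectation, apply Chernoff layer by layer, and conclude that \ref{bfsstop:manyends} must fire long before the BFS depth required for \ref{bfsstop:length} is reached. The handling of the dependence issue via a global $\Bi(Q,p)$ count and a union bound over times is also in the spirit of the paper's argument.

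There is, however, a factual slip in your first step. You write that each $J_0\in\cJ$ has $\ell_{J_0}=O((\log n)^2)$, and hence that an active $J$ with $\ell_J=L_0$ sits at BFS depth $L_0-O((\log n)^2)=\Omega(n)$. This misreads the definition: in the algorithm $\ell_J=|P_J|$ where $P_J=P_0'|_J$ is the \emph{longer} subpath of $P_0'$ ending at $J$, so for $J_0\in\cJ$ one has $\ell_{J_0}$ between $(1-\delta/2)L_1$ and $(1-\delta/2)L_1+(\log n)^2$, not $O((\log n)^2)$. Consequently the BFS depth needed to trigger \ref{bfsstop:length} is only $(\log n)^2$, not $\Omega(n)$. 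Fortunately this does not damage your argument: your growth estimate shows $|\Gdisc|$ exceeds $\eps^2 n^j$ after $d^*=\Theta(\log n)$ generations, and $\Theta(\log n)\ll (\log n)^2$ still gives the desired contradiction. The paper's proof makes exactly this comparison: it bounds $\ell_{J_0}\le (1-\delta/2)L_1+(\log n)^2$, deduces that \ref{bfsstop:length} requires generation $(\log n)^2$, and observes that by then a single generation would have size $(1+\delta^2)^{(\log n)^2}X_0>n^j$. Once you correct the value of $\ell_{J_0}$, your proof is essentially identical to the paper's.
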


\begin{proof}
We consider the \emph{generations} of the BFS process, where the $j$-sets of
$\cJ$ form generation $0$ and a $j$-set lies in generation $i$ if it was discovered
from a $j$-set in generation $i-1$.
Observe that
since for each $J \in \cJ$ we have $\ell_J \le (1-\delta/2)L_1 + (\log n)^2$,
\ref{bfsstop:length} can only be invoked
if we have reached generation at least $(\log n)^2$.

Let us define $X_i$ to be the number of $j$-sets in generation $i$, so in particular $X_0 = |\cJ| = (\log n)^2$ deterministically.
By Proposition~\ref{prop:bfs:manyqueriesfromJ}, we make at least $X_i (1+\Theta(\delta))(\binom{k-j}{a}p)^{-1}$ queries to obtain generation~$i+1$
from generation~$i$, and therefore $\EE(X_{i+1} | X_i = x_i) \ge (1+\Theta(\delta))x_i$.
A repeated application of the Chernoff bound and a union bound over all at most $n^j$ generations
shows that whp $X_i \ge (1+\delta^2)^i X_0$ for every $i$
until time $\Tstop$ (after which Proposition~\ref{prop:bfs:manyqueriesfromJ} no longer applies).

In order to reach generation $(\log n)^2$, this would involve discovering a generation of size at least
$(1+\delta^2)^{(\log n)^2} X_0 > n^j$, which is clearly impossible since this is larger than the total number of 
$j$-sets available (and indeed~\ref{bfsstop:manyends} would already have been applied long before this point).
\end{proof}

\begin{lemma}\label{lem:bfsstop:degree}
Whp~\ref{bfsstop:degree} is not invoked.
\end{lemma}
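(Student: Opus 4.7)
The plan is to follow the same structure as the outline of the proof of Lemma~\ref{lem:dfsstop:degree}, adapting it to the BFS setting. The main simplification is that \frayalg\ has no ``new starts'' contribution: the initial set of active $j$-sets is fixed at $\cJ$, which has size $(\log n)^2$. The main complication, relative to the DFS case, is the branching behaviour of BFS, which affects the pivot accounting. Throughout, the parameters are chosen so that $c_0 \ll c_1 \ll \cdots \ll c_{j-1} \ll 1/\eps$.

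I would proceed by induction on $i$, showing that whp, for every $t \le \Tstop$, $\Delta_i(t) \le \eps c_i n^{j-i}$. The base case $i=0$ is immediate: stopping condition~\ref{bfsstop:manyends} ensures $|\Gdisc(t)| \le \eps^2 n^j$ for $t \le \Tstop$, and $\eps^2 n^j \ll \eps c_0 n^j$ since $c_0 \gg 1 \gg \eps$. For the inductive step, fix an $i$-set $I$ and partition the $j$-sets of $\Gdisc(t)$ containing $I$ into three categories: (a) elements of $\cJ$ containing $I$, of which there are at most $|\cJ| = (\log n)^2 = o(n^{j-i})$; (b) $j$-sets that became active via a \emph{jump} to $I$ (an edge discovered from a source $j$-set $J \in \Gdisc$ with $I \not\subset J$, where $I$ is contained in the resulting new $j$-set); and (c) $j$-sets that became active via a \emph{pivot} at $I$ (an edge discovered from a source $j$-set $J \supset I$).

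For jumps, I would classify by $z := |I \cap J| \in [i-1]_0$. By the inductive hypothesis, the number of $J \in \Gdisc(t)$ with $|I \cap J|=z$ is at most $\binom{i}{z}\Delta_z(t) = O(\eps c_z n^{j-z})$, and for each such $J$ the number of $k$-sets containing both $J$ and $I$ is $O(n^{k-j-i+z})$. Summing over $z$ gives $O(\eps c_{i-1} n^{k-i})$ queries that could produce a jump to $I$, each succeeding with probability $p=\Theta(n^{-(k-j)})$, so the expected number of jumps to $I$ is $O(\eps c_{i-1} n^{j-i})$; a Chernoff bound gives failure probability $\exp(-\Omega(n^{j-i})) = \exp(-\Omega(n))$ since $j-i \ge 1$, small enough to union-bound over the $O(n^i)$ choices of $I$ and the polynomially many relevant times. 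Each jump contributes at most $\binom{k-j}{a} = O(1)$ to the degree of $I$.

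For pivots, the key observation is that any chain of consecutive pivots at $I$ has length at most $\lceil (k-i)/(k-j)\rceil = O(1)$, since each pivot shifts the end $j$-set forward by $k-j$ vertices and after enough such shifts the $j$-set no longer contains $I$. Each source of such a chain is either an element of $\cJ$ containing $I$ or a jump to $I$, so there are $O(\eps c_{i-1} n^{j-i})$ sources whp. From each $j$-set $J$ containing $I$, the number of edges found is stochastically dominated by $\Bi(\binom{n}{k-j}, p)$, with mean $O(1)$, and each edge contributes at most $\binom{k-j}{a} = O(1)$ new $j$-sets containing $I$; together with the bound on chain length, this gives $O(1)$ pivot descendants per source whp (again by a Chernoff bound applied with room to union-bound over all $j$-sets and sources). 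Adding the three contributions gives $\Delta_i(t) = O(\eps c_{i-1} n^{j-i}) \le \eps c_i n^{j-i}$ since $c_{i-1} \ll c_i$, completing the induction.

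The main obstacle will be the pivot analysis: unlike in the DFS case, where the linear nature of the exploration makes it easy to see that pivot chains have $O(1)$ descendants per source, in BFS each $j$-set $J$ in the chain is fully explored at once and can spawn many children. I expect the correct formulation is to bound, simultaneously for all $J \in \Gdisc$, the number of new children containing $I$ by a constant whp using Chernoff plus a union bound, then multiply by the (constant) chain length. Care is needed because ``$J \supset I$'' and the number of children depend on $I$, so the union bound must be carried out uniformly over $I$ and $J$ with sufficient slack in the concentration.
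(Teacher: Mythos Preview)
Your overall approach mirrors the paper's proof: the same decomposition into new starts, jumps, and pivots, the same induction on $i$, and essentially identical jump analysis. A minor difference is that you bound the new-start contribution by $|\cJ|=(\log n)^2$, whereas the paper observes that since the $j$-sets of $\cJ$ are all ends of subpaths of a single path $P_0'$, any fixed $i$-set $I$ lies in only $O(1)$ of them deterministically. Both bounds suffice.

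The substantive gap is in your pivot analysis. You propose to bound, for each individual $j$-set $J \supset I$, the number of children (edges found from $J$) by a constant with high probability, then union-bound over all such $J$ and multiply by the constant chain length. This cannot work: the number of edges from a single $J$ is dominated by $\Bi\bigl(\binom{n}{k-j},p\bigr)$ with mean $\Theta(1)$, so the probability it exceeds any fixed constant $C$ is itself a positive constant, far too large for a union bound over the many choices of $J$. If instead you take $C=\Theta(\log n)$ to obtain polynomially small tails, then each pivot process (of depth at most $k$) could have size $\Theta\bigl((\log n)^k\bigr)$, and the total pivot contribution becomes $\Theta\bigl(\eps c_{i-1} (\log n)^k n^{j-i}\bigr)$, which is not bounded by $\eps c_i n^{j-i}$ since the $c_i$ are constants.

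The paper's remedy is to aggregate rather than to bound each process individually: one notes that the \emph{expected} total size of a single pivot process is $O(1)$ (bounded depth, bounded expected branching at each step), and then applies a single Chernoff bound to the total number of edges discovered across all $O\bigl(\eps c_{i-1}n^{j-i}\bigr)$ pivot processes combined. This total has expectation $\Theta\bigl(\eps c_{i-1} n^{j-i}\bigr) \ge \sqrt{n}$, so Chernoff gives failure probability at most $\exp(-\sqrt{n})$, which is small enough to union-bound over the $O(n^i)$ choices of $I$.
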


\begin{proof}
The proof is broadly similar to the proof of Lemma~\ref{lem:dfsstop:degree}. 
The case when $i=0$ is trivial, since 
$\Delta_0(t) = |\Gdisc(t)|  \le \eps^2 n^j$ because of~\ref{bfsstop:manyends}.
Let us therefore suppose that $1 \le i \le j-1$ and $I$ is an $i$-set.
We observe that there are two ways in which the degree of $I$ can increase in $\Gdisc$.
\begin{itemize}
\item A \emph{new start} at $I$ consists of a $j$-set of $\cJ$ which contains $I$. This contributes one to the degree of $I$.
\item A \emph{jump} to $I$ occurs when a $k$-set containing $I$ is queried from a $j$-set not containing $I$ and this $k$-set is indeed an edge.
Then the degree of $I$ increases by at most $\binom{k-j}{a}$.
\item A \emph{pivot} at $I$ occurs when an edge is discovered from a $j$-set already containing $I$.
Then the degree of $I$ increases by at most $\binom{k-j}{a}$.
\end{itemize}

We bound the contributions made by new starts, jumps and pivots separately.

\subsubsection*{New starts}

Note that in contrast to the DFS algorithm,
 new starts are already determined by the input, since we start only once with this
input and terminate the algorithm if we have no more active $j$-sets.
It is also clear that since the $j$-sets of $\cJ$ lie within a path, the degree of $I$ in $\cJ$ is $O(1)$
(deterministically).

\subsubsection*{Jumps}

The number of $j$-sets of $\Gdisc$ which intersect $I$ in $z\le i-1$ vertices
is at most $\binom{i}{z} \Delta_z(\Gdisc) = O(\eps c_z n^{j-z})$,
where we have used the condition~\ref{bfsstop:degree} with~$z$ in place of~$i$.

Furthermore, each such $j$-set gives rise to $O(n^{k-j-i+z})$ queries which would result in a jump to $I$
if the corresponding $k$-set is an edge, and thus the total number of queries which would result
in a jump to $I$ is
$\sum_{z=0}^{i-1} O(\eps c_z n^{j-z})\cdot O(n^{k-j-i+z}) = O(\eps c_{i-1} n^{k-i})$.
Each such query gives a jump with probability $p=O(n^{k-j})$, and a Chernoff bound
implies that with probability at least $1-\exp(-\sqrt{n})$ the number of jumps is $O(\eps c_{i-1}n^{j-i})$.
Since each jump gives a contribution of at most $\binom{k-j}{a}=O(1)$ to the degree of $I$ in $\Gdisc$,
the total contribution is $O(\eps c_{i-1}n^{j-i})$.

\subsubsection*{Pivots}

For each $j$-set $J$ arising from either a new start
at $I$ or a jump to $I$, we start a new \emph{pivot process} consisting of
all of the $k$-sets we discover from $J$ and its descendants which contain $I$.
It is important that, while we now have a BFS rather than
a DFS process, we are still constructing paths (albeit many simultaneously)
and therefore the number of consecutive pivots at $I$ is at most $\frac{k-i}{k-j} \le k$.
Therefore each pivot process runs for at most $k$ generations.

Furthermore, the number of queries made from each $j$ set in the pivot process
is at most $n^{k-j}$, and therefore the expected number of pivots discovered
from each $j$-set is at most $pn^{k-j}=O(1)$. Since each pivot gives rise to at most
$\binom{k}{j}=O(1)$ many $j$-sets, the expected total size of each pivot process is $O(1)$.

Now since we start at most $O(\eps c_{i-1}n^{j-i})$ pivot processes,
and the expected size of each of these is bounded, it is easy to see that with very high
probability, the total size of all of these pivot processes is $O(\eps c_{i-1} n^{j-i})$.
Indeed, this can be shown using a Chernoff bound on the total number of edges we discover
in all the pivot processes combined, and using the fact that $\eps c_{i-1} n^{j-i} \ge \eps n\ge \sqrt{n}$.
Furthermore the failure probability provided by the Chernoff bound is at most $\exp(-\sqrt{n})$.

\vspace{0.3cm}

Summing the contributions from new starts, jumps and pivots, and applying a union bound
on the failure probabilities, we deduce that with probability at least $1-2\exp(-\sqrt{n})$
the degree of $I$ in $\Gdisc$ is at most
$O(1) + O(\eps c_{i-1}n^{j-i}) \le \eps c_i n^{j-i}$.
A union bound over all $O(n^i)=o(\exp(\sqrt{n}))$ choices of $I$ completes the argument.
\end{proof}

Now combining Claim~\ref{claim:bfsstop:deathtime}, Proposition~\ref{prop:bfsstop:length}
and Lemma~\ref{lem:bfsstop:degree} immediately gives the following corollary.

\begin{corollary}\label{cor:bfsstop:manyends}
Whp stopping condition~\ref{bfsstop:manyends} is invoked first.\qed
\end{corollary}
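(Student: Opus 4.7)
The plan is to observe that the \frayalg\ algorithm certainly terminates at some finite time, since each query is of a distinct $k$-set and there are only finitely many ($\binom{n}{k}$) such sets available. Thus at least one of the four stopping conditions~\ref{bfsstop:death}--\ref{bfsstop:manyends} must be the first to be triggered. The corollary then follows by showing that with probability $1-o(1)$, the first three of these conditions are \emph{not} the first to be invoked, leaving~\ref{bfsstop:manyends} as the only remaining possibility.

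More concretely, I would simply apply a union bound over the failure events of the three preceding results. By Claim~\ref{claim:bfsstop:deathtime}, whp the stopping condition~\ref{bfsstop:death} is not invoked; by Proposition~\ref{prop:bfsstop:length}, whp the stopping condition~\ref{bfsstop:length} is not invoked; and by Lemma~\ref{lem:bfsstop:degree}, whp the stopping condition~\ref{bfsstop:degree} is not invoked. Each failure event has probability $o(1)$, so a union bound rules out all three simultaneously whp. Combined with the fact that \emph{some} stopping condition must fire, we conclude that whp it is~\ref{bfsstop:manyends} which is invoked first.

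The main obstacle in this statement was entirely absorbed into the three preceding lemmas; the corollary itself is essentially a one-line logical deduction. The only minor subtlety worth mentioning is that the three preceding results are each proved under the assumption that the algorithm is run up to time $\Tstop$, so their conclusions apply to the entire run without circularity: each says that the corresponding stopping condition is not the first to cause $\Tstop$ to be reached.
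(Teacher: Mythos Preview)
Your proposal is correct and matches the paper's approach exactly: the paper simply states that the corollary follows immediately by combining Claim~\ref{claim:bfsstop:deathtime}, Proposition~\ref{prop:bfsstop:length}, and Lemma~\ref{lem:bfsstop:degree}, and presents it with a \qed\ in lieu of proof. Your added remarks on finiteness of the run and the absence of circularity are fine but not strictly needed.
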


\subsection{Proof of Lemma~\ref{lem:manyends}}

We can now use this corollary to prove Lemma~\ref{lem:manyends}.

As described above, we let $P_0'$ be a path of length $(1-\delta/2)L_1 + 2(\log n)^2$ in $\Hknp$,
which exists whp by Lemma~\ref{lem:longpath}, and
let $P_0$ be the path obtained by removing $(\log n)^2$ edges from the start and from the end of $P_0'$.
Let $\cJ_1,\cJ_2$ be the sets of $j$-sets which can form the end of a path which lies within $P_0'$ but not within $P_0$,
divided into two classes in the natural way.

We first run the \frayalg\ algorithm with input $k,j,H=\Hknp,P_0,P_0',\cJ=\cJ_1$
and with forbidden vertex set $\Forb=\emptyset$.
Let $\cA$ be the resulting outcome of $\Gdisc(t)=A_t\cup E_t$ at the stopping time $t=\Tstop$.

We now aim to run the algorithm again, with $\cJ_2$ in place of $\cJ_1$.
However, it is in theory possible that all of the augmenting paths ending in a $j$-set of $\cA$ share
some common vertex $x$, and that the same happens when we construct augmenting paths at the other end,
meaning that all pairs of paths will be incompatible.
Of course, intuitively this is very unlikely. To formalise this intuition, we will make use of
our ability to forbid a set $\Forb$ of vertices, which we did not need to do in the first iteration.

Let us define a \emph{heavy} vertex to be a vertex which does not lie in $P_0'$, but lies in at least $\eps^2 (\log n)^3 n^{j-1}$ many
augmented paths $P_J$, where $J \in \cA$ (i.e.\ which lies in at least a $(\log n)^3/n$ proportion of the augmented paths).

\begin{claim}
Whp there are at most $\delta^2 n$ heavy vertices.
\end{claim}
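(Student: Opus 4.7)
The plan is to reduce the claim to a deterministic double-counting argument, with the only probabilistic input being Corollary~\ref{cor:bfsstop:manyends}, which gives whp that $|\cA| = |\Gdisc(\Tstop)| = \eps^2 n^j$. Throughout I condition on that event.

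First I would bound, for every $J \in \cA$, the number of vertices of $P_J$ that lie outside $V(P_0')$. Any such $J$ descends via the BFS from some $J_0 \in \cJ_1$, so $P_J$ is the concatenation of the initial path $P_0'|_{J_0}$ (all of whose vertices lie in $V(P_0')$) with the augmenting sub-path produced by the BFS extensions. Since $|P_0'|_{J_0}| \ge |P_0| = (1-\delta/2)L_1$ and $\ell_J \le L_0 = (1-\delta/2)L_1 + 2(\log n)^2$ because stopping condition~\ref{bfsstop:length} has not been triggered, the BFS adds at most $2(\log n)^2$ edges to the path. Hence $P_J$ contains at most $2(k-j)(\log n)^2 = O((\log n)^2)$ vertices outside $V(P_0')$.

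Next I would count pairs $(v, J)$ with $v \notin V(P_0')$, $J \in \cA$ and $v \in V(P_J)$. Summing the vertex bound above over all $J \in \cA$ gives at most
$$
|\cA| \cdot O((\log n)^2) = O\bigl(\eps^2 (\log n)^2 n^j\bigr).
$$
On the other hand, each heavy vertex contributes at least $\eps^2 (\log n)^3 n^{j-1}$ to this sum by definition. Dividing, the number of heavy vertices is at most
$$
\frac{O(\eps^2 (\log n)^2 n^j)}{\eps^2 (\log n)^3 n^{j-1}} = O\!\left(\frac{n}{\log n}\right),
$$
which is $o(n)$ and therefore strictly less than $\delta^2 n$ for all large~$n$.

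There is no real obstacle here: once $|\cA| = \eps^2 n^j$ is in hand via Corollary~\ref{cor:bfsstop:manyends}, and the BFS's budget of at most $2(\log n)^2$ extension edges is extracted from stopping condition~\ref{bfsstop:length}, the claim follows by pure averaging. The only point to verify carefully is that $\ell_J - \ell_{J_0} \le 2(\log n)^2$ uniformly over $J \in \cA$, which requires tracing that $J_0 \in \cJ_1$ satisfies $\ell_{J_0} \ge |P_0|$ because $P_0'|_{J_0}$ is the \emph{longer} of the two subpaths at $J_0$.
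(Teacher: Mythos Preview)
Your argument is correct and is essentially the paper's own proof: both double-count pairs $(v,J)$ with $v$ a heavy vertex and $J\in\cA$ such that $v\in V(P_J)$, bounding the count from above via $|\cA|\cdot O((\log n)^2)$ and from below via $h\cdot \eps^2(\log n)^3 n^{j-1}$, to obtain $h=O(n/\log n)\le \delta^2 n$. One small remark: you do not actually need Corollary~\ref{cor:bfsstop:manyends} here, since the upper bound $|\cA|\le \eps^2 n^j$ already holds deterministically from stopping condition~\ref{bfsstop:manyends}; the paper accordingly cites only~\ref{bfsstop:manyends} and~\ref{bfsstop:length}.
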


\begin{proof}
Let $q$ be the number of pairs $(v,P)$ consisting of a heavy vertex $v$
and an augmenting path containing $v$. We will estimate $q$ in two different ways.

By~\ref{bfsstop:manyends}, there are at most $\eps^2 n^j$ choices for $P$,
each of which contains at most $j+2(\log n)^2(k-j) \le 2k(\log n)^2$ vertices due to~\ref{bfsstop:length}, and therefore
certainly at most $2k(\log n)^2$ heavy vertices $v$, which implies that $q \le \eps^2 n^j \cdot 2k(\log n)^2$.

On the other hand, letting $h$ denote the number of heavy vertices,
we have $q \ge h \cdot \eps^2 (\log n)^3 n^{j-1}$ by the definition of a heavy vertex.

Combining these two estimates, we obtain $h \le 2kn/(\log n) \le \delta^{2}n$.
\end{proof}

We now run the algorithm again, this time with input $\cJ=\cJ_2$ and with $\Forb$ being
precisely the set of heavy vertices (and all other inputs as before).
Let $\cB$ be the resulting outcome of $\Gdisc = A_t \cup E_t$ at the stopping time.
We claim that whp $P_0$, $\cA$ and $\cB$ satisfy the conditions of Lemma~\ref{lem:manyends}.

First, recall that $P_0$ has length $(1-\delta/2)L_1$ by definition.
Next, observe that by Corollary~\ref{cor:bfsstop:manyends} we have $|\cA|,|\cB|= \eps^2 n^j$ whp.

Let $J_s,J_e$ be the two end $j$-sets of $P_0$.
It is clearly true by construction that every $j$-set $J\in A$ augments $(P_0,J_s)$
while every $j$-set $J\in B$ augments $(P_0,J_e)$ (without loss of generality this way round).
Furthermore, the length of the augmenting paths
is at most $2(\log n)^2$
due to~\ref{bfsstop:length}.

Finally, we show the trickiest of the properties, that for most pairs $(A,B) \in \cA \times \cB$
the augmenting paths are disjoint. Given $A \in \cA$, let $P_{A,J_s}$ denote the corresponding augmenting path
(i.e.\ $P_A$ without $P_0-J_s$),
and let $\mathscr{P}_\cA = \{P_{A,J_s}: A \in \cA\}$ denote the set of these augmenting paths.
For $B \in \cB$, we define $P_{B,J_e}$ and $\mathscr{P}_\cB$ similarly.
Observe that for each $B \in \cB$ the path $P_{B,J_e}$ has length at most $2(\log n)^2$, 
and therefore contains $O((\log n)^2)$ vertices.
Since when constructing $\cB$ we excluded heavy vertices, each of these vertices lies
in at most $\eps^2(\log n)^3 n^{j-1}$ of the paths $\mathscr{P}_\cA$,
and therefore each path of $\mathscr{P}_\cB$ intersects with at most $O\left(\eps^2 (\log n)^5 n^{j-1}\right) = O\left(\frac{(\log n)^5}{n}|\cA|\right)$
of the paths in $\mathscr{P}_\cA$.
Therefore the number of pairs $(A,B) \in \cA \times \cB$ such that the paths $P_{A,J_s},P_{B,J_e}$ are vertex-disjoint
is at least
$$
|\cB|\left(1-O\left(\frac{(\log n)^5}{n}\right)\right)|\cA| \ge (1-\eps)\eps^4 n^{2j},
$$
as required.

\bibliographystyle{amsplain}
\bibliography{../References}

\end{document}